\documentclass[preprint,pdftex]{imsart}
\usepackage{amsthm,amsmath,amssymb,amsfonts,epic,latexsym,multirow,ulem,bbm,dsfont}
\RequirePackage{natbib}
\RequirePackage[colorlinks,citecolor=blue,urlcolor=blue]{hyperref}
\usepackage[dvips]{graphicx}
\usepackage[T1]{fontenc} 
\usepackage[utf8]{inputenc}

\startlocaldefs

\newtheorem{theo}{Theorem}[section]
\newtheorem{defi}[theo]{Definition}
\newtheorem{prop}[theo]{Proposition}

\newtheorem{assu}{Assumption}

\newtheorem{exam}{Example}

\newtheorem{lemm}[theo]{Lemma}
\newtheorem{rema}[theo]{Remark}


\def\R{\mathbb{R}}
\def \N{\mathbb{N}}
\def \Z{\mathbb{Z}}
\def \A{\mathcal{A}}

\newcommand{\sem}[1]{\mbox{$[\![ #1 ]\!]$}}

\def\1{\mathbbm{1}}
\def \P{\mathbb{P}} 
\def \E{\mathbb{E}} 
\def \F{{\cal F}} 
\def \B{{\cal B}} 
\def \Pstar{\mathbb{P}^\star}
 
\def \Et{\mathbb{E}^\theta}

\def \bEt{\overline{\mathbb{E}}^\theta}

\def \bEs{\overline{\mathbb{E}}^{\star}}
\def \Pt{\mathbb{P}^\theta}
\def \Pts{\mathbb{P}^{\ts}}
\def \bPt{\overline{\mathbb{P}}^\theta}

\def \bPs{\overline{\mathbb{P}}^{\star}}
\def \bPtZ{\overline{\mathbb{P}}^{\theta,Z}}

\newcommand{\ts}{{\theta^{\star}}}

\newcommand{\PPt}{\mathbf{P}^{\theta}}
\newcommand{\EEt}{\mathbf{E}^{\theta}}

\newcommand{\argmax}{\mathop{\rm Argmax}}

\def \eps{\varepsilon}

\def\w{\omega}
\def\bw{{\boldsymbol \omega}}

\def\bX{\mathbf X}
\def\bXn{\mathbf{X}_n}

\def\t{\theta}

\endlocaldefs


\begin{document}

\begin{frontmatter}


\title{Hidden Markov model for parameter estimation of a random walk in a Markov environment. }

\runtitle{HMM for random walk in Markov environment}

\author{\fnms{Pierre} \snm{Andreoletti}\ead[label=e1]{Pierre.Andreoletti@univ-orleans.fr}}
\address{
Laboratoire   MAPMO,   UMR~CNRS~6628,  F\'ed\'eration   Denis-Poisson,
Universit\'e d’Orl\'eans, Orl\'eans, France. \printead{e1}}
\and 
\author{\fnms{Dasha} \snm{Loukianova}\ead[label=e2]{dasha.loukianova@maths.univ-evry.fr}}
\address{Laboratoire de Math\'ematiques et Mod\'elisation d'\'Evry,
Universit\'e   d'\'Evry  Val  d'Essonne,   UMR~CNRS~8071,  \'Evry,
France. \printead{e2}}
\and
\author{\fnms{Catherine} \snm{Matias}\corref{}\ead[label=e3]{catherine.matias@math.cnrs.fr}}
\address{Laboratoire de  Probabilit\'es   et    Mod\`eles   Al\'eatoires,
UMR~CNRS~7599, Universit\'e Pierre  et Marie Curie, Universit\'e Paris
Diderot, Paris, France. \printead{e3}}

\runauthor{Andreoletti et al.}

\begin{abstract}
We focus on the parametric  estimation of the distribution of a Markov
environment from the  observation of a single trajectory of
a  one-dimensional  nearest-neighbor  path  evolving  in  this  random
environment.  In the ballistic case, as the length of the path increases, we prove
consistency,  asymptotic  normality  and  efficiency  of  the  maximum
likelihood  estimator.  Our  contribution  is two-fold:  we  cast  the
problem into the one of parameter estimation in a hidden Markov model (HMM)
and establish that  the bivariate Markov chain underlying  this HMM is
positive Harris recurrent. We  provide different examples of setups in
which our  results apply, in particular that of DNA unzipping model, and we give  a simple  synthetic experiment to
illustrate those results. 
\end{abstract}

 \begin{keyword}[class=AMS]
  \kwd[Primary ]{62M05}
   \kwd{62F12}
   \kwd[; secondary ]{60J25}
 \end{keyword}

\begin{keyword}
\kwd{Hidden Markov model}
\kwd{Markov environment}
\kwd{Maximum likelihood estimation}
\kwd{Random walk in random environment}
\end{keyword}


\end{frontmatter}

\section{Introduction}

Random walks in random environments (RWRE) form a subclass of canonical models in the 
more general framework of random motions in random media that is widely
used in physics. 
 These models go back to the pioneer works of~\cite{Chernov}, who introduced them to
describe DNA replication and of~\cite{Temkin} who used them in the field
of metallurgy. A more complete list of application fields may be found
in the introduction  of \cite{Bogachev} as well as  in the references
therein. These models  have been intensively studied in  the last four
decades,    mostly   in   the    physics   and    probability   theory
literature. Some surveys on the topic include~\cite{Hughes,Shi,Zeitouni,Revesz}.\\

Statistical issues  raised by those processes have  been overlooked in
the literature  until very  recently, when new  biophysics experiments
produced data that can be  modeled (at least in an ideal-case setup)
by RWRE~\citep{Balda_06}. Consequently, a  new series of works appeared
on statistical  procedures aiming  at estimating parameters  from RWRE
data.  Another motivation to these studies comes from the fact that 
 RWRE  are  closely linked  to
branching  processes with immigration  in random  environments (BPIRE)
and that the statistical issues  raised in one context may potentially
have an impact in the other.

\cite{andreo_sinai} investigates the local
  time of the one dimensional  recurrent RWRE in order to estimate the
  trajectories of the underlying random potential. 
 In~\cite{Andreo},   the ideal-case model from~\cite{Balda_06} is considered: a (finite length) DNA molecule is unzipped several
times and  some device translates  these unzippings into  random walks
along  the  DNA  molecule,  whose  sequence  of  bases  is  the  random
environment.  Here, the goal  is to  reconstruct this  environment and
thus achieve sequencing of the molecule. \cite{Andreo} prove that a Bayesian estimator (maximum a posteriori)
of this  sequence of bases is  consistent as the  number of unzippings
increases  and  they  characterize  the probability  of  reconstruction
error. 
In  a   different  setting,   several  authors  have   considered  the
information on the environment that is contained in one single trajectory of
the walk  with infinite length.  In their pioneer work, \cite{AdEn}
consider  a  very  general  RWRE  and provide  equations  linking  the
distribution of some  statistics of the trajectory to  some moments of
the  environment   distribution.   In the specific case of a
one-dimensional  nearest neighbor path,  those equations  give moment
estimators for the environment distribution parameters.
 More recently, \cite{Comets_etal} studied a maximum likelihood estimator (MLE)
in the specific context of a one-dimensional nearest neighbor path in
transient ballistic regime. They prove the
consistency  of  this  estimator  (as  the length  of  the  trajectory
increases). From a  numerical point of view, this  MLE outperforms the
moment estimator constructed from the work of \cite{AdEn}.  In a 
companion  article~\citep{Falc_etal},  they  have further  studied  the
asymptotic  normality of  the  MLE (still  in  the ballistic  regime),
showed its asymptotic efficiency and
constructed confidence  intervals for  the parameters.  This  work has
been    extended    to     the    transient    sub-ballistic    regime
in~\cite{Falc_bis}.  In this body of work on maximum
  likelihood procedures, the results rely on the
branching  structure of  the  sequence  of the  number  of left  steps
performed by the walk, which was originally observed by \cite{KKS}. 
In the recurrent  case, as the walk visits  every sites infinitely
  often, this  branching process of  left steps explodes and  the same
  approach is useless there. In theory, it is possible in this case to
  estimate the environment itself at each site, and then show that the
  empirical measure  converges to  its distribution. The  problem with
  such a  "naive" approach is the localization  phenomena of recurrent
  RWRE,  discovered by  \cite{Sinai}: most  of the
  sites visited by the RWRE will be extremely few visited, because the
  walk  spends  a  majority  of   its  time  in  the  valleys  of  the
  potential \citep{Andreoletti,Andreoletti2}. This non uniformity is automatically handled with the 
  approach followed by \cite{Comets_bis} and the authors establish
  consistency of two estimators, a MLE and a maximum pseudo-likelihood
  estimator.  \\

We now stress  the fact that all the  previously mentioned statistical
works but the one from \cite{Andreo} are valid in the case of an environment composed
of   independent   and   identically   distributed   (i.i.d.)   random
variables. While very convenient, this assumption might be restrictive
in some contexts, e.g. DNA modeling. 
In the present work, we investigate the statistical estimation of a
parametric  Markov   environment  from   a  single  trajectory   of  a
one-dimensional  nearest-neighbor path, when  its length  increases to
infinity. We  consider the case of  a transient walk  in the ballistic
regime. Our contribution is twofold: first, we show how the problem is
cast into  the one  of estimating the  parameter of a  hidden Markov
model (HMM), or  more specifically  of a first-order  autoregressive process
with Markov regime. Indeed, the RWRE itself is not a HMM but the branching process of the sequence of
  left steps performed by the walk, is.
 Second, we prove that the bivariate Markov chain
that   defines  the  underlying   autoregressive  process   is  Harris
positive and we exhibit its stationary distribution.  As a consequence, we can rely on previously established
results for these autoregressive processes with Markov regime \citep{DMR_04} and thus obtain the
consistency  and asymptotic  normality  of the  MLE  for the  original
nearest-neighbor path in Markov environment. \\

Roughly  speaking, an  autoregressive model  with Markov  regime  is a
bivariate   process  where   the  first   component  forms   a  latent
(unobserved) Markov chain while conditionally on this first component,
the second one  has the structure of an  autoregressive process. These
processes  form a  generalization of  hidden Markov  models  (HMM), in
which 
the first  component remains a  latent Markov chain, while  the second
forms  a sequence  of independent  observations, conditionally  on the
first. HMM have been introduced by \cite{BP} with finite - latent and
observed - state spaces. Statistical properties of the MLE in HMM form a
rich literature;  a non exhaustive  list would start with  the seminal
work of \cite{BP}, include the developments of 
\cite{Leroux,BR,BRR,JP,Legland_Mevel,Douc_M,DMR_04,Genon_Catalot} 
and  finish  with the  latest  results  from  \cite{DMOvH}. A  general
introduction to HMM may be found in the  survey by \cite{Ephraim_Merhav} and the book
by \cite{Hmmbook}.

While it  is often believed that autoregressive  processes with Markov
regime are straightforward generalizations  of HMM (and this is indeed
the  case  concerning e.g.  algorithmic  procedures), the  statistical
properties of these  models are slightly more difficult  to obtain, \citep[see e.g.][for model selection issues]{Chambaz_M}. 
As
for  the  convergence properties  of  the  MLE,  only the  article  by
\cite{DMR_04} considers  the autoregressive case (instead  of HMM)
explaining why  we focus on their  results in our context.  It is also
worth  noticing that  many  of the  previous  results~\citep[with  exception
of][]{DMOvH} require uniform positivity of the transition density
of the latent Markov chain, which might not be satisfied in some applications
(particularly in the case of an unbounded  latent state space). As in
our case, the latent state  space corresponds to the environment state
space  and is  included  in $(0,1)$,  we  do not  face such  problems.
Moreover, we stress  that the results in \cite{DMR_04}  rely on rather
weak  assumptions \citep[compared  to previous  results in][on
which they are  partly built]{BRR,JP}. As a consequence,  the assumptions that
we obtain on  RWRE are also rather weak and will  be satisfied in many
contexts. \\

This   article   is   organized   as  follows.   Our   one-dimensional
nearest-neighbor   path  in   Markov  environment   is   described  in
Section~\ref{sec:RWMRE}.  Then we  explain why  the  direct likelihood
approach  may not  be  followed (Section~\ref{sec:pbm})  and cast  the
estimation  problem as  the one  of parameter  estimation in  a hidden
Markov model  (Section~\ref{sec:HMM}). After having set  the scene, we
state the assumptions (on the RWRE) and results in Section~\ref{sec:hyp_res}. We
prove  that (under classical  assumptions) the  MLE is  consistent and
asymptotically   normal.   Section~\ref{sec:exsim}   illustrates   our
results: we  start by  explaining how the  likelihood may  be computed
(Section~\ref{sec:comput}), then we  explore different examples and describe
our assumptions in  these cases (Section~\ref{sec:examples}) and close
the section with synthetic experiments on a simple example (Section~\ref{sec:simus}). The proofs
of  our results  are presented  in Section~\ref{sec:proofs}.  The main
point is to  establish that the bivariate Markov  chain that underlies
the HMM  is positive Harris  recurrent (Section~\ref{sec:propr}). Then
consistency, asymptotic normality  and efficiency (i.e. the asymptotic
variance is the inverse of the Fisher information) follow from~\cite{DMR_04} by
proving that our assumptions on the RWRE imply theirs on the HMM
(Sections~\ref{sec:cons_proof} and~\ref{sec:AN_proof}, respectively).

\section{Model description}
\subsection{Ballistic random walk in a Markov environment}
\label{sec:RWMRE}
We start this section by describing the random environment. 
Let $S$ be a closed subset of $(0,1)$ either finite, discrete or continuous, and $\B(S)$ the
associated Borel $\sigma$-field. 
The environment is given by $\bw=(\w_x)_{x\in \Z}\in S^\Z$, a positive Harris recurrent,
aperiodic and stationary
 Markov  chain with values in $S$ and transition kernel $Q:S\times\B(S)\to [0,1]$.
 We suppose that the transition kernel 
$Q=Q_\theta$  depends  on some  unknown  parameter  $\theta$ and  that
$\theta$ belongs to some compact space $\Theta \subset 
\R^q$. Moreover, $Q_\theta$ is absolutely continuous  
 either with respect to (w.r.t.) the Lebesgue measure on $(0,1)$ when $S$
is continuous  or w.r.t.  the counting measure  when $S$  is discrete,
with density denoted by $q_{\theta}$. 
 We  denote by $\mu_\theta$  the density  of its  associated stationary
distribution. 
Let us denote by $\PPt$ the law of the environment $\bw$ on $(S^\Z, \B(S^\Z))$ and $\EEt$ the corresponding expectation.
 
Now, conditionally on the environment, the law of the random walk $\bX=(X_t)_{t \in \N}$ is the one of the time homogeneous Markov chain on $\Z$ starting at $X_0=0$ and with  transition probabilities 
\begin{equation*}
\forall (x,y) \in \Z^2,\quad P_\bw(X_{t+1}=y|X_t=x) =
  \left\{
    \begin{array}{cc}
      \w_x & \text{ if } y=x+1, \\
1- \w_x & \text{ if } y=x-1, \\
0 & \text{ otherwise}.
    \end{array}
\right.
\end{equation*}
 The measure $P_\bw$ on $(\Z^{\N}, \B(\Z^\N))$
is usually referred to as the quenched law of walk $\bX$. 
Note  that this  conditional  law  does not  depend  on the  parameter
$\theta$ but only on the environment $\bw$ at the current site $x$.
 We also denote by 
$p_a(x,y)$ the corresponding transition density (w.r.t. to counting measure),
namely 
\[
\forall (x,y) \in \Z^2, \forall a \in S, \quad p_a(x,y) =a\1\{y=x+1\} + (1-a)\1\{y=x-1\},
\]
 where $\1\{\cdot\}$ denotes the indicator function.
Next we define the measure $\P^\theta$ on $S^\Z\times \Z^\N$ through
\begin{equation}
  \label{eq:Pt}
\forall F\in\B(S^\Z),\forall G\in\B(\Z^\N),\quad
\P^\theta(F\times G)=\int_F P_\bw (G) d\bf P^{\theta}(\w).
\end{equation}
The second marginal of $\P^{\theta}$ (that on $\Z^{\N}$), denoted also
$\P^\t$ when no  confusion occurs, is called the  annealed law of walk
$\bX$. 
We denote by $\E^\theta$ the corresponding expectation. Note
that the  first marginal  of $\P^\t$  is the law  of the  Markov chain
$\bw,$ denoted by $\PPt.$

For all $ k\in\Z$, we let 
\[   \tilde \w_k=\frac{1-\w_k}{\w_k}.\]
In the case of an i.i.d. environment  $\bw$, \cite{Sol} gives the classification of $\bX$ between transient or recurrent cases according to whether $\EEt (\log\tilde\w_0)$
  is different  or not from zero.   For stationary ergodic environments, which is the case here, \cite{Alili}   establishes   that  this
  characterization remains valid. 
Thus, if 
$
\EEt (\log \tilde\w_0 )<0,
$
then the walk is transient to the right, namely 
\[
\lim_{t\to\infty}X_t=+\infty, \quad \P^\t-a.s.
\]
Let $T_n$ be the first hitting time of the positive integer $n$, 
\begin{equation*} 
T_n = \inf \{ t \in \N \, : \, X_t = n \}
\end{equation*}
and define
\begin{equation}
  \label{eq:R}
R=(1+\tilde \w_1+\tilde\w_1\tilde\w_2 +\dots).
\end{equation}
Theorem 4.1 in \cite{Alili} shows that if
the environment satisfies the condition 
\begin{equation}
 \label{eq:bal}
\EEt (R) <+ \infty, 
\end{equation}
then the speed of the walk is strictly positive. Namely, $ \P^{\theta}
\mbox{-almost surely}$, the ratio  $T_n/n$ converges to a finite limit
as $n$ increases. Thus~\eqref{eq:bal}  gives the  so-called ballistic  condition  on the
random walk with Markov environment. Note that in the i.i.d. case, this condition reduces to $\EEt(\tilde \w_0)
  <1$. Moreover, in the non independent case, when the distribution of the environment is {\it uniquely
    ergodic}, namely $\bw$ is not i.i.d. and admits a unique invariant distribution, \cite{Alili} establishes
  that transience (namely $\EEt(\log \tilde \w_0) <0$) automatically implies ballistic regime \citep[see Lemma 6.1
  in][]{Alili}. Since in our context we assume that the Markov environment $\bw$ admits a unique invariant distribution,
  the ballistic assumption thus reduces to 
\begin{equation}
 \label{eq:ballistic} \left\{
   \begin{array}{ll}
     \EEt(\tilde \w_0) <1 & \text{ if } \bw \text{ i.i.d}, \\
\EEt(\log \tilde \w_0) <0 & \text{ if } \bw \text{ non independent}. 
   \end{array}
\right. 
\end{equation}
In the  following, we consider  a transient to the right  ballistic process $\bX$.

\subsection{Problem and motivation}
\label{sec:pbm}
We consider  a finite  trajectory $\bXn =(X_t)_{t  \le T_n}$  from the
process $\bX$, stopped at the first hitting time of a positive integer
$n\ge 1$. The apparently more general case of a sequence $(X_1,\dots,X_n)$ of observations is discussed in Remark~\ref{rem:nonseq}.
We  assume  that this  sequence  of  observations  is generated  under
$\P^{\ts}:=\Pstar$ for a true parameter value $\ts$ belonging to the interior $\mathring \Theta$ of $\Theta$. 
Our goal is  to estimate this parameter value  $\ts$ from the sequence
of observations $\bXn$ using a maximum likelihood approach.
To motivate the following developments, we will first explain why we can not directly rely on the likelihood of these observations.
Indeed, let $\mathcal{V}_{n}$ be the set of sites $x\in \Z$ visited by
the process up to time $T_n$, namely 
\[
\mathcal{V}_{n}=\{x\in \Z ; \,  \exists 0\le s \le T_n, X_s=x\}.
\]
Under the assumption of a transient (to the right) process, the random set $\mathcal{V}_{n}$ is equal to $\sem{\xi_n,n}$, where $\xi_n\in \Z^-$ and $\sem{a,b}$ denotes the set of integers between $a$ and $b$ for any  $a \le b$ in $\Z$. Here, $\xi_n$ is the smallest integer value visited by the process  $\bXn$. We also introduce $\bw(\bXn) :=(\w_{\xi_n},\dots,\w_n)$ which is the random environment restricted to the set of sites visited by $\bXn$. 
Now, the likelihood of $\bXn$ is given by the following expression
\begin{align}
 \Pt(\bXn)=                   &\int_{S}\dots                  \int_{S}
 \Pt(\bw(\bXn)=(a_{\xi_n},\dots,a_n),\bXn)da_{\xi_n}\dots da_n
\nonumber \\
 = &\int_S\dots \int_{S} \mu_\theta(a_{\xi_n}) \prod_{i=\xi_n}^{n-1} q_\theta(a_i,a_{i+1}) \prod_{s=0}^{T_n-1} p_{a_{X_s}}(X_{s},X_{s+1})
da_{\xi_n}\dots da_n.
\label{eq:naive_lik}
\end{align}

Computing the  likelihood from the  above expression would  require to
compute  $|\mathcal{V}_n|$  integral  terms (where  $|\cdot|$  denotes
cardinality).  As  $|\mathcal{V}_n|\ge n$,  this  means  that using  a
discretization method  over $N$ points  for each integral  (or letting
$N$ be the cardinality of $S$) would result in summing over at least $N^n$ different terms. This is unfeasible but for small values of $n$. Moreover, the above expression is not well suited for studying the convergence properties of this likelihood. 
Following  \cite{Comets_etal},  instead of  focusing  on the  observed
process $\bXn$, we will rather consider the underlying sequence $L_n^n
,  L_{n-1}^n ,  \dots ,  L_0^n$ of  the number  of left  steps  of the
process  $\bXn$  at  the   sequence  of  sites  $(n,n-1,\dots,0)$  and
construct our  estimator from this  latter sequence. Though we  do not
need it, note that it is argued in \cite{Comets_etal} that the latter is
in  fact a sufficient statistic  (at least  asymptotically)  for the
parameter $\theta$. In the next section,  we show that in the case
of  a  Markov  environment,  this  process exhibits  a  hidden  Markov
structure.   Moreover for  transient  RWRE,  this  process  is
  recurrent, allowing us to study the convergence properties of MLE.

\subsection{The underlying  hidden Markov chain}
\label{sec:HMM}
We define  the sequence of  left steps at  each visited site  from the
(positive part of the) trajectory $\bXn$ as follows. Let 
 \begin{equation*}
L_x^n :=\sum_{s=0}^{T_n-1}\1\{X_s=x;\ X_{s+1}=x-1\}, \quad \forall x \in \{0,\dots,n\}.
\end{equation*}
It is observed by \cite{KKS} in the case of an i.i.d. random environment 
 that  the  sequence  $(L_n^n  ,   L_{n-1}^n  ,  \dots  ,  L_0^n)$  is
 distributed  as a  branching  process with  immigration  in a  random
 environment (BPIRE). We will first show that this remains true in the
 case of a Markov environment.  To this aim, let us introduce the time reversed environment $\breve \bw=(\breve
   \w_x)_{x\in \Z}$ defined by $\breve \w_x =\w_{-x}$ for all $x\in \Z$. It is a Markov
   chain on $S$ with stationary density $\mu_\theta$ and transition $\breve q_\theta$ defined by 
\[
\forall a,b \in S, \quad \breve q_\theta (a,b) =\frac{\mu_\theta(b) q_\theta(b,a)}{\mu_\theta(a)}. 
\]
Now we recursively define a sequence  of random variables  $(Z_k)_{k\ge 0}$  with $Z_0=0$ and
\begin{equation}
  \label{eq:Z1}
\forall k \ge 0, \quad Z_{k+1} = \sum_{i=0}^{Z_k} \xi_{k+1,i},   
\end{equation}
where for all $k\ge  1$, the random variables $(\xi_{k,i})_{i\in\N }$,
are  defined  on  the   same  probability  space  as  previously,  are
independent and their  conditional distribution, given the environment
$\breve \bw$ is 
\begin{equation}
  \label{eq:Z2}
\forall m \in \N, \quad P_{\breve \bw}(\xi_{k,i}=m)= (1-\breve \w_k)^m\breve \w_k.   
\end{equation}
Here, $P_{\breve \bw}$ is defined similarly as $P_\bw$ for the environment $\breve \bw$ replacing $\bw$.
Then, conditionally on $\breve \bw$,  the sequence $(Z_k)_{k\in\N}$ is an
  inhomogeneous  branching process  with immigration,  with identical
offspring and immigration law, given by a geometric distribution (whose parameter depends on the random
environment $\breve \bw$).  Moreover,   it is easily seen that the annealed
distribution of the sequence $(L_n^n , L_{n-1}^n , \dots , L_0^n)$ and
that of $(Z_0 , Z_{1} , \dots , Z_n)$ are the same. 

\begin{lemm}\label{lem:eqinlaw}
For any fixed  integer $n\ge 1$, the sequence of left steps $(L_n^n , L_{n-1}^n , \dots , L_0^n)$ has same distribution as $(Z_0 , Z_{1} , \dots , Z_n)$ under $\Pt$.
\end{lemm}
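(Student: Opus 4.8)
The plan is to establish the branching structure first \emph{conditionally on the environment} $\bw$, where it reduces to the quenched computation of \cite{KKS}, and only afterwards to integrate over $\bw$, using its stationarity to identify the resulting annealed law with that of $(Z_0,\dots,Z_n)$ driven by the time-reversed environment $\breve\bw$. The point is that \cite{KKS} work with an i.i.d.\ environment, but their argument is purely quenched: it fixes the environment and uses only the Markov property of the walk given $\bw$, never the independence of the $(\w_x)_x$. Hence the quenched part transfers verbatim, and the only genuinely new ingredient is the environment reversal needed to pass to the annealed law.

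\emph{Quenched branching structure.} First I would fix $\bw$ and work under $P_\bw$. For $1\le x\le n$, let $u_{x-1}$ be the number of left-to-right crossings of the edge $(x-1,x)$ before $T_n$; the number of right-to-left crossings of that edge is exactly $L_x^n$. Since $\bXn$ starts at $0\le x-1$ and is stopped on first reaching $n\ge x$, the elementary accounting of nearest-neighbor edge crossings gives
\begin{equation*}
u_{x-1}=L_x^n+1,\qquad 1\le x\le n.
\end{equation*}
Next I would look at the successive decisions of the walk at site $x-1$: at each visit it steps to $x$ with probability $\w_{x-1}$ and to $x-2$ with probability $1-\w_{x-1}$, and by the strong Markov property these form, conditionally on $\bw$, i.i.d.\ Bernoulli$(\w_{x-1})$ trials. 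Because the walk is transient to the right and stopped at $T_n$, every left step from $x-1$ is eventually followed by a return to $x-1$, the last decision at $x-1$ is necessarily a right step, and the total number of right steps equals $u_{x-1}=L_x^n+1$. Consequently, conditionally on $L_x^n$, the number $L_{x-1}^n$ of left steps from $x-1$ is the number of failures preceding the $(L_x^n+1)$-th success, i.e.
\begin{equation*}
L_{x-1}^n=\sum_{i=0}^{L_x^n} g_i^{(x-1)},
\end{equation*}
where the $g_i^{(x-1)}$ are i.i.d.\ with $P_\bw(g_i^{(x-1)}=m)=(1-\w_{x-1})^m\w_{x-1}$, and the strong Markov property makes these families independent across distinct sites.

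\emph{Identification and passage to the annealed law.} Setting $Z_k:=L_{n-k}^n$, one has $Z_0=L_n^n=0$, and the previous display with $x=n-k$ shows that, conditionally on $\bw$, $(Z_0,\dots,Z_n)$ is exactly the branching process with immigration \eqref{eq:Z1}--\eqref{eq:Z2} except that the parameter used at step $k+1$ is $\w_{n-k-1}$ rather than $\breve\w_{k+1}$. It then remains to integrate over the environment. The offspring parameters of my quenched process, read in order, are $(\w_{n-1},\w_{n-2},\dots,\w_0)$, whereas those of $(Z_k)$ are $(\breve\w_1,\dots,\breve\w_n)=(\w_{-1},\dots,\w_{-n})$. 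By stationarity of $\bw$ under $\PPt$ (a shift of the index by $n$),
\begin{equation*}
(\w_{n-1},\w_{n-2},\dots,\w_0)\ \stackrel{d}{=}\ (\w_{-1},\w_{-2},\dots,\w_{-n})=(\breve\w_1,\dots,\breve\w_n),
\end{equation*}
so the two environment vectors driving the recursions have the same law, namely that of a Markov chain with reversed transition $\breve q_\theta$ and stationary density $\mu_\theta$. Integrating the common conditional branching law against these identically distributed environments yields the asserted equality in distribution under $\Pt$.

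\textbf{Where the difficulty sits.} The hard part will be the quenched step, specifically the claim that, \emph{conditionally on $L_x^n$}, the count $L_{x-1}^n$ splits into a sum of $L_x^n+1$ independent geometric variables: this requires arguing that the decision coins at $x-1$ are independent of the higher-level excursion data that fix the number $u_{x-1}$ of up-crossings, which is exactly what the strong Markov property delivers. By contrast, the annealed passage is where the Markov (rather than i.i.d.) nature of the environment genuinely enters, the spatial reversal being absorbed precisely by the definition of $\breve\bw$ together with the stationarity of $\bw$.
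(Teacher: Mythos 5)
Your proof is correct and follows essentially the same route as the paper: the paper simply cites the quenched identity $P_\bw((L_n^n,\dots,L_0^n)=\cdot)=P_{\bar\bw^n}((Z_0,\dots,Z_n)=\cdot)$ from Section~4.1 of \cite{Comets_etal}, where you re-derive it via the edge-crossing and coin-tossing argument, and then both arguments conclude identically by observing that stationarity of $\bw$ makes the driving environment vector equal in law to $(\breve\w_1,\dots,\breve\w_n)$.
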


\begin{proof}
For any fixed integer $n\ge 1$, let $\bar{\bw}^n:=(\w_n,\w_{n-1},\dots,\w_0,
  \w_{-1},\ldots)$ denote the time reversed environment starting at $\w_n$. 
Let also $P_{\bar{\bw}^n}$ be defined
  similarly as $P_\bw$ for the environment $\bar{\bw}^n$ replacing $\bw$.
Then  it is known  that  for any  sequence $(z_0,\dots,  z_n)\in
\N^{n+1}$, we have the equality 
\[
 P_\bw((L_n^n   , L_{n-1}^n , \dots , L_0^n)=(z_0,\dots, z_n))  =
P_{\bar\bw^n}((Z_0,\dots,Z_n)=(z_0,\dots,    z_n))  
\]
\citep[see for instance Section 4.1 in][]{Comets_etal}. 
Now the right-hand side $P_{\bar\bw^n}((Z_0,\dots,Z_n)=(z_0,\dots,    z_n))  $ only  depends on  the environment
  $\bar\bw^n$ 
through       the      first      $(n+1)$       variables $(\w_n,\w_{n-1},\dots,\w_0)$ whose distribution under $\PPt$
is the same as $(\breve \w_0,\breve \w_1,\dots,\breve\w_n)$.
As a consequence, using definition~\eqref{eq:Pt} of distribution $\Pt$ we obtain the result.  
\end{proof}

When the environment $\bw$ is composed of i.i.d. random variables, the
resulting  sequence $(Z_k)_{k\ge  0}$  is a  homogeneous Markov  chain
under $\Pt$ \citep[see e.g.][]{Comets_etal}. 
Now, when the  environment $\bw$ is itself a Markov chain, we  observe  that $(Z_k)_{k\ge 0}$ is distributed as a hidden Markov chain, or more precisely   
as the second marginal of a first order autoregressive process with Markov regime \citep{DMR_04}, where the latent
  sequence is given by $\breve \bw$. 
 We state this result as a lemma (namely Lemma~\ref{lem:bivariate} below)
 even though its proof is obvious and thus omitted. Let us recall that a first order autoregressive process with Markov
 regime (or Markov-switching autoregression) is a bivariate process $\{(\breve \w_k,Z_k)\}_{ k\ge 0}$ such that $\breve
 \bw=(\breve \w_k)_{k\ge 0}$ is a Markov chain and conditionally on $\breve \bw$, the sequence $(Z_k)_{k\ge
  0}$  is  an inhomogeneous  Markov chain whose transition from  $Z_{k-1}$ to $Z_k$
 only depends on $Z_{k-1}$ and $\breve \w_k$.  

For any $a\in S$ and $(x,y)\in \N^2$, denote
\begin{equation}
  \label{eq:emission}
g_a(x,y)=\binom{x+y}{x}a^{x+1}(1-a)^y 
\end{equation}
and let $\delta_x$ be the Dirac measure at $x$.
Let us recall that the process $(Z_k)_{ k\ge 0}$ is defined through~\eqref{eq:Z1} and~\eqref{eq:Z2}.
\begin{lemm}\label{lem:bivariate}
Under $\P^{\t}$, the  process $\{(\breve \w_k,Z_k)\}_{ k\ge  0}$ is a first-order
autoregressive  process   with  Markov  regime. The first component $\breve\bw$ is an homogenous  Markov chain with
transition kernel density $\breve q_\t$ and initial distribution $\mu_\t$.
 Conditionally   on  $\breve \bw$,  the   process  $(Z_k)_{k\in\N},$   is  an
 inhomogeneous Markov chain, starting from $Z_0=0$ and with transitions
\[\forall (x,y)\in \N^2, \forall k\in\N\quad
   P_{\breve\bw}(Z_{k+1}=y|Z_k=x)= g_{\breve \w_{k+1}}(x,y).
   \]
As a consequence, $\{(\breve \w_k,Z_k)\}_{ k\ge  0}$ is a Markov chain with state space $S\times \N$, starting from $\mu_\t\otimes \delta_0$ and 
with transition kernel density $\Pi_\t$ defined for all $(a,b,x,y) \in S^2\times\N^2$ by
\begin{equation}
\label{eq:bikernel}
 \Pi_\t((a,x), (b,y))  =  \breve q_\t(a,b)g_{b}(x,y). 
\end{equation} 
\end{lemm}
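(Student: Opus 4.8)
The plan is to treat the two components separately and then recombine them. The statement about the first component is exactly the time-reversal property recorded just before the lemma: since $\bw$ is a stationary Markov chain with transition density $q_\theta$ and invariant density $\mu_\theta$, its reversal $\breve\bw$ is again a stationary Markov chain with invariant density $\mu_\theta$ and transition density $\breve q_\theta(a,b)=\mu_\theta(b)q_\theta(b,a)/\mu_\theta(a)$, given by the standard time-reversal formula for a stationary chain; in particular $\breve\w_0=\w_0$ has density $\mu_\theta$. It then remains to identify, conditionally on $\breve\bw$, the law of the chain $(Z_k)_{k\ge 0}$ defined by the recursion~\eqref{eq:Z1}--\eqref{eq:Z2}, and finally to check that the joint one-step kernel factorizes as in~\eqref{eq:bikernel}.

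The core is a single computation. Fix the environment $\breve\bw$ and condition on $Z_k=x$. By~\eqref{eq:Z1}, $Z_{k+1}=\sum_{i=0}^{x}\xi_{k+1,i}$ is a sum of $x+1$ variables which, given $\breve\bw$, are i.i.d. geometric with success probability $\breve\w_{k+1}$ as in~\eqref{eq:Z2}, and which are independent of $(\xi_{j,i})_{j\le k,\,i\ge 0}$, hence of $(Z_0,\dots,Z_k)$. The sum of $r=x+1$ such i.i.d. geometric variables follows a negative binomial law, so that
\[
P_{\breve\bw}(Z_{k+1}=y\mid Z_k=x)=\binom{x+y}{y}\breve\w_{k+1}^{\,x+1}(1-\breve\w_{k+1})^{y}=g_{\breve\w_{k+1}}(x,y),
\]
using $\binom{x+y}{y}=\binom{x+y}{x}$ and definition~\eqref{eq:emission}. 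This identity does two things at once: the right-hand side depends on the past only through $Z_k$ and on the environment only through $\breve\w_{k+1}$, which is precisely the inhomogeneous Markov property of $(Z_k)$ conditionally on $\breve\bw$ with the announced transition, so that, together with the first component, the pair is an autoregressive process with Markov regime in the sense recalled above.

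It remains to glue the two components into a single Markov chain. Writing $\mathcal{F}_k=\sigma(\breve\w_0,\dots,\breve\w_k,Z_0,\dots,Z_k)$, I would factorize the one-step conditional law as
\[
\Pt\big(\breve\w_{k+1}\in db,\,Z_{k+1}=y\mid \mathcal{F}_k\big)=\Pt\big(\breve\w_{k+1}\in db\mid \mathcal{F}_k\big)\,\Pt\big(Z_{k+1}=y\mid \mathcal{F}_k,\breve\w_{k+1}\big).
\]
The second factor equals $g_{\breve\w_{k+1}}(Z_k,y)$ by the previous step. The first factor is where the only real subtlety lies: one must check that $\breve\w_{k+1}$ is conditionally independent of $(Z_0,\dots,Z_k)$ given $(\breve\w_0,\dots,\breve\w_k)$, so that it reduces to $\breve q_\theta(\breve\w_k,b)\,db$ by the Markov property of $\breve\bw$. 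This holds because $(Z_0,\dots,Z_k)$ is a function of the offspring variables $(\xi_{j,i})_{j\le k}$ whose conditional law given $\breve\bw$ depends on the environment only through $(\breve\w_1,\dots,\breve\w_k)$, hence not on $\breve\w_{k+1}$. This conditional-independence verification is the main (and essentially the only) obstacle; granting it, the product collapses to $\breve q_\theta(\breve\w_k,b)\,g_b(Z_k,y)\,db$, a function of $(\breve\w_k,Z_k)$ alone. This yields the Markov property of $\{(\breve\w_k,Z_k)\}_{k\ge 0}$ with transition density $\Pi_\theta$ as in~\eqref{eq:bikernel}, while the initial law $\mu_\theta\otimes\delta_0$ is immediate from $\breve\w_0\sim\mu_\theta$ and $Z_0=0$.
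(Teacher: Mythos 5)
Your proof is correct. The paper itself omits the proof of this lemma, stating that it is ``obvious,'' so there is no authorial argument to compare against; the route you take --- the time-reversal formula for the first marginal, the negative-binomial law of a sum of $x+1$ i.i.d.\ geometric variables to identify $g_{\breve\w_{k+1}}(x,y)$, and the conditional independence of $\breve\w_{k+1}$ from $(Z_0,\dots,Z_k)$ given $(\breve\w_0,\dots,\breve\w_k)$ to glue the two components --- is precisely the routine verification the authors left to the reader, and you have filled in the one genuinely non-trivial step (the conditional-independence check) correctly.
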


\begin{rema}
The  conditional  autoregressive   part  of  the  distribution,  given
by~\eqref{eq:emission}, is usually referred to as emission 
distribution. Note that in our  framework, this law does not depend on
the parameter $\theta$. 
\end{rema}

Under $\Pt$, the process $(\breve \bw,Z)$ has initial 
distribution $\mu_\t\otimes \delta_0$.  
In the sequel, we also need $(\breve \bw,Z)$ as well as the chain $(\w_k)_{k\ge 0}$  starting from any initial distribution.
For any probability $\nu$ on ${\cal B}(S\times \N)$,  denote  $\Pt_{\nu}$  the law 
  of $(\breve \bw,Z)$ starting  from $(\w_0,Z_0)\sim \nu$ (note that $\breve \w_0=\w_0$). Denote $\Et_{\nu}$
  the corresponding expectation.  In particular, for $(a,x)\in S\times
  \N$, we  let $\Pt_{(a,x)}$ and $\Et_{(a,x)}$ be  the probability and
  expectation  if  $( \w_0,Z_0)=(a,x)$.   Moreover, when  only  the
  first component  is concerned  and when no confusion occurs, if the chain  $\breve \bw$ or $\bw$ starts  from its
  stationary  distribution $\mu_{\t}$, we  still denote  this marginal
  law by $\PPt$ and the  corresponding expectation by $\EEt.$ If
  $\breve \bw$ or $\bw$
  start from another initial law,  for example $ \w_0=a$,  we denote
  their  law   by $\PPt_{a}$ and corresponding expectation  $\EEt_{a}$.
For  $n\in\N$,  we let $\F_n=\sigma\{\w_k, k=0,\ldots, n\}$ (resp. $\breve{\F}_n=\sigma\{\breve \w_k, k=0,\ldots, n\}$)  be
  the $\sigma$-field  induced by the $(n+1)$ first random variables    of the environment (resp. of the time reversed  environment). Moreover, we denote by 
$ \bw^n=( \w_n, \w_{n+1},\ldots)$ and 
$\breve \bw^n=(\breve \w_n,\breve \w_{n+1},\ldots)$ the shifted sequences. 
 The family of shift operators $(\tau^n)_{n\geq 1}$ where 
 $\tau^n:\Omega\to\Omega$ is defined by 
\begin{equation}
 \forall \bw, \breve \bw\in\Omega, \quad  \tau^n(\bw)= \bw^n \text{ and } \tau^n(\breve \bw)=\breve \bw^n.
 \end{equation} 
In  Section~\ref{sec:proofs},  we  show  that  under  the  ballistic
assumption, the  bivariate  kernel   $\Pi_\t((a,x),  (b,y))$  is  positive  Harris
recurrent and admits a unique invariant distribution with density $\pi_\t$, for
which we give an explicit formula (see Proposition~\ref{prop:biv_Harris}).  In  the following, we  let $\bPt$  and $\bEt$  be the  probability and
expectation induced  when considering the  chain $\{(\breve \w_k,Z_k)\}_{k\ge
  0}$  under its  stationary distribution  $\pi_\t$.

\section{Assumptions and results}
\label{sec:hyp_res}
\subsection{Estimator construction}

Recall that our aim is to infer the unknown parameter $\ts\in\mathring
\Theta$, using the observation  of a finite trajectory $\bXn$  up to the first
hitting time $T_n$ of site $n$. The observed trajectory is transformed
into the  sequence $L_n^n,
  L_{n-1}^n ,  \dots ,  L_0^n$ of  the number  of left  steps  of the
process  $\bXn$  at  the  sequence of  sites  $(n,n-1,\dots,0)$.  This
trajectory is  generated under the law  $\Pstar$ (recall that  $\Pstar$ is a
shorthand notation of $\Pts$). Due to the equality in law given by Lemma~\ref{lem:eqinlaw}, 
 we  can consider  that  we  observe a  sequence  of random  variables
 $(Z_0,\dots,Z_n)$ which is the  second component of an autoregressive
 process  with Markov  regime  described in  Lemma~\ref{lem:bivariate}.
 Thus under $\Pstar$, the law of the MLE of these observations is the same than the law of MLE built from $(Z_0,\dots,Z_n)$.
 
  As  a consequence,  we can  now rely  on a  set of  well established
  techniques developed in the context of autoregressive processes with
  Markov regime, both for computing efficiently the likelihood and for
  establishing its  asymptotic properties. Following~\cite{DMR_04}, we
  define a conditional log-likelihood, conditioned on an initial state
  of the environment  $\breve \w_0=a_0 \in S$. The reason for doing so is that the stationary distribution of
  $\{\breve \w_k,Z_k)\}_{k \ge 0}$ and hence the true likelihood, is typically infeasible to compute. 

\begin{defi}\label{def:crit}
Fix some  $a_0\in S$ and  consider the conditional  log-likelihood of
the observations defined as 
\begin{equation}
  \label{eq:log_lik}
\ell_n(\t,a_0):=\log\P^{\t}_{(a_0,0)}(Z_1,\dots,
Z_n)=\log\int_{S^n}\prod_{i=1}^n
\breve q_{\t}(a_{i-1},a_i)g_{a_i}(Z_{i-1},Z_i)da_i .
\end{equation}
\end{defi}

Note  that the  above expression  of the  (conditional) log-likelihood
shares the computational problems mentioned for expression~\eqref{eq:naive_lik}.  However, in the present context of
autoregressive processes with  Markov regime, efficient computation of
this expression  is possible. The key ingredient  for this computation
(that also serves to study the convergence properties of $\ell_n$)
is to rely on the following additive form 
\begin{align*}
  \ell_n(\theta,a_0)   & =    \sum_{k=1}^n    \log    \Pt    _{(a_0,0)}
  (Z_k|Z_0,\dots,Z_{k-1}) \\
&= \sum_{k=1}^n \log \left( \iint_{ S^2}g_{b}(Z_{k-1}, Z_k)
\breve q_\t(a,b)\Pt_{(a_0,0)}(\breve \w_{k-1}=a|Z_0^{k-1}) da db
\right) ,
\end{align*}
where $Z_s^t$  denotes $Z_s,Z_{s+1},\dots,Z_t$ for  any integers $s\le
t$.  
 We further develop this point in Section~\ref{sec:comput} and also refer to \cite{DMR_04} for more details.

\begin{defi}
  The estimator $\hat \theta_n$ is defined as a measurable choice\[
\hat \theta_n \in \argmax_{\theta \in \Theta} \ell_n(\t,a_0).
\] 
\end{defi}
Note  that  we omit  the  dependence  of  $\hat \theta_n$  on  the
  initial state $a_0$ of the environment. 

  \begin{rema}\label{rem:nonseq}
When considering a size-$n$ sample $X_1, \dots X_n$ instead of a trajectory stopped at random time $T_n$, we may
consider $m:= m(n) = \max_{1\le i \le n } X_i$ and restrict our attention to the sub-sample $X_1,\dots, X_{T_m}$. As we consider a transient random walk, $m(n)$ increases to infinity with $n$. Consistency with respect
  to $n$ or $m(n)$ is equivalent. Now considering the rates, note that in the ballistic case we can easily obtain
    that $m(n) \sim cn$ for some $c>0$ so that rates of convergence as well as efficiency issues with
  respect to $m(n)$ or $n$ are the same. Note that  information about $T_{m}$ has to be extracted first and
  then the data may be reduced to the sequence of left steps without loosing information. 
  \end{rema}

\subsection{Assumptions and results}
Recall  that  $ q_{\theta}$  and  $\mu_{\theta}$ are  respectively  the
transition  and the  invariant probability densities  of the  environment Markov
chain $ \bw$ with values in $S$, while  $\breve q_{\theta}$  and  $\mu_{\theta}$ are the same quantities for the time
reversed chain $\breve \bw$. Moreover, $S$ is a closed
subset  of  $(0,1)$ so  that  we can  assume  that  there exists  some
$\eps \in (0,1)$ such that 
\begin{equation}
  \label{eq:S_subset}
S \subseteq [\eps; 1-\eps].  
\end{equation}
The above assumption is known as the uniform ellipticity condition. 

We also recall that the random variable $R$ is defined by~\eqref{eq:R}.
 
\begin{assu}(Ballistic case). 
  \label{hyp:ballistic}
For any $\theta\in\Theta$, Inequality~\eqref{eq:ballistic} is satisfied. 
\end{assu}

\begin{assu} 
\label{hyp:noyau}
There exist some constants $0<\sigma_{-}, \sigma_+<+\infty $ such that 
\begin{equation*}
\sigma_{-}\le\inf_{\t \in \Theta} \inf_{a,b \in S} q_\t(a,b)\le \sup_{\t \in \Theta} \sup_{a,b\in S}q_{\t}(a,b)\le \sigma_+.
  \end{equation*}
\end{assu}
Note that it  easily follows from this assumption  that the stationary
density $\mu_\t$ also satisfies 
    \begin{equation}\label{eq:stat_bound}
\sigma_{-}\le\inf_{\t \in \Theta} \inf_{a \in S} \mu_\t(a)\le  \sup_{\t \in \Theta} \sup_{a\in S}\mu_{\t}(a)\le \sigma_+.
  \end{equation}
Moreover, we also get that the time reversed transition density $\breve q_\theta$ satisfies
  \begin{equation}
    \label{eq:breve_bound}
   \frac{ \sigma_{-}^2} {\sigma_{+}} \le\inf_{\t \in \Theta} \inf_{a,b \in S} \breve q_\t(a,b)\le \sup_{\t \in \Theta} \sup_{a,b\in S}\breve q_{\t}(a,b)\le \frac{\sigma_+^2}{\sigma_{-}}.
  \end{equation}

Assumptions~\ref{hyp:ballistic} and~\ref{hyp:noyau} are used to establish that the bivariate process $\{ (\breve \w_k,
  Z_k)\}_{k\geq 0}$ is positive Harris recurrent. Note in particular that the weakest assumptions currently ensuring consistency
  of the MLE in the HMM setting contain positive Harris recurrence of the hidden chain \citep{DMOvH} and~\ref{hyp:noyau} is
  further required in the less simple case of an autoregressive model with Markov regime \citep{DMR_04}. The lower bound 
  in~\ref{hyp:noyau} may be  restrictive in a general  HMM setting as it prevents the support $S$ from being unbounded. However here
  we have $S\subseteq  (0,1)$ and thus~\ref{hyp:noyau} is satisfied in many examples (see Section~\ref{sec:exsim}). 

Next assumption is classical from a statistical perspective and requires smoothness of the underlying model.  

  \begin{assu}(Regularity condition). 
  \label{hyp:regular}
For all $(a,b)\in S^2$, the map $\t \mapsto q_\t(a,b)$ is continuous.
\end{assu}
In order to ensure identifiability of the model, we naturally require identifiability of the parameter from the distribution of the environment. 

\begin{assu}(Identifiability condition).
  \label{hyp:ident}
  \begin{equation*}
\forall \t,\t' \in \Theta, \quad 
    \theta = \theta' \iff q_\t = q_{\theta'}.
  \end{equation*}
\end{assu}

\begin{theo}
\label{thm:consistency}
  Under  Assumptions~\ref{hyp:ballistic}  to~\ref{hyp:ident}, the  maximum
  likelihood estimator $\hat \theta_n$ converges $\Pstar$-almost surely to the true parameter
  value $\ts$ as $n$ tends to infinity. 
\end{theo}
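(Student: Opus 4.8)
The plan is to transfer the estimation problem entirely to the hidden Markov / autoregressive framework and then invoke the consistency theorem of~\cite{DMR_04}. By Lemma~\ref{lem:eqinlaw} the observed sequence of left steps has, under $\Pstar$, the same law as $(Z_0,\dots,Z_n)$, and by Lemma~\ref{lem:bivariate} the latter is the second marginal of the autoregressive process with Markov regime $\{(\breve\w_k,Z_k)\}_{k\ge0}$, whose latent chain has transition density $\breve q_\ts$ and whose emission density $g_a$ is given by~\eqref{eq:emission}. Hence the law of $\htet$ is exactly that of the MLE built from $(Z_0,\dots,Z_n)$ in this model, and it suffices to check that Assumptions~\ref{hyp:ballistic}--\ref{hyp:ident} imply the hypotheses under which~\cite{DMR_04} prove strong consistency. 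The overall scheme of that theorem is classical: one shows that $n^{-1}\ell_n(\t,a_0)$ converges $\Pstar$-a.s., uniformly in $\t$ and independently of the initial state $a_0$, to a deterministic limit $\ell(\t)$; that $\ell$ is continuous and uniquely maximized at $\ts$; and one then concludes $\htet\to\ts$ by a standard $\argmax$ argument.

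First I would verify the analytic hypotheses. The uniform bounds~\eqref{eq:breve_bound} on $\breve q_\t$, which follow from Assumption~\ref{hyp:noyau}, play the role of the crucial lower-bound assumption of~\cite{DMR_04}: they guarantee exponential forgetting of the prediction filter $\Pt_{(a_0,0)}(\breve\w_{k-1}=\cdot\mid Z_0^{k-1})$, so that the additive summands of $\ell_n$ can be approximated by a stationary ergodic sequence and the dependence on $a_0$ is washed out in the limit. Positive Harris recurrence of the bivariate chain, established in Proposition~\ref{prop:biv_Harris}, then supplies the ergodic theorem yielding the a.s.\ convergence of $n^{-1}\ell_n$ to $\ell(\t)$, while continuity of $\t\mapsto\breve q_\t$ (a consequence of Assumption~\ref{hyp:regular}) gives continuity of $\ell$. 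It remains to check the integrability conditions of~\cite{DMR_04} on the emission density. Since $g_a(x,y)=\binom{x+y}{x}a^{x+1}(1-a)^y$ and $a\in[\eps,1-\eps]$ by~\eqref{eq:S_subset}, one has $\log g_a(x,y)=\log\binom{x+y}{x}+(x+1)\log a+y\log(1-a)$, so the required moments reduce to finiteness of $\bEt[Z_0]$ and of $\bEt[\log\binom{Z_0+Z_1}{Z_0}]$; I would deduce these from the ballistic Assumption~\ref{hyp:ballistic}, which controls the moments of the underlying branching process with immigration through the finiteness of $\EEt(R)$ in~\eqref{eq:R}.

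The main obstacle is the identifiability step, i.e.\ lifting Assumption~\ref{hyp:ident}, which only concerns the environment kernel $q_\t$, to the identifiability of the law of the observed process required by~\cite{DMR_04}. Concretely, the contrast inequality gives $\ell(\t)\le\ell(\ts)$ with equality if and only if the stationary laws of $(Z_k)_{k\ge0}$ under $\bPt$ and under $\bPts$ coincide, and one must show that this forces $\t=\ts$. Since the emission~\eqref{eq:emission} does not depend on $\t$, the whole $\t$-dependence of the observed law sits in the pair $(\mu_\t,\breve q_\t)$; the plan is therefore to recover $(\mu_\t,\breve q_\t)$ from the finite-dimensional distributions of $(Z_k)$. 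Here the explicit negative-binomial form of the transition, whose geometric parameter is exactly the latent coordinate $\breve\w_k$, is what makes the latent dynamics recoverable: for instance $\bEt[Z_{k+1}\mid Z_k=x,\breve\w_{k+1}=a]=(x+1)(1-a)/a$ is affine in $x$ with slope encoding $a$, so matching the low-dimensional marginals of $(Z_k)$ should identify $\mu_\t$ and $\breve q_\t$, and then the reversal formula $\breve q_\t(a,b)=\mu_\t(b)q_\t(b,a)/\mu_\t(a)$ together with Assumption~\ref{hyp:ident} yields $\t=\ts$.

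Finally, combining the uniform a.s.\ convergence $n^{-1}\ell_n(\cdot,a_0)\to\ell$, the continuity of $\ell$ on the compact set $\Theta$, and the fact that $\ts$ is its unique maximizer, the standard consistency argument for $\argmax$-estimators gives $\htet\to\ts$ $\Pstar$-almost surely. I expect the two places requiring genuine work to be the moment and integrability verification driven by the ballistic condition and, above all, the identifiability transfer, since the passage from injectivity of $\t\mapsto q_\t$ to injectivity of $\t$ with respect to the law of $(Z_k)_{k\ge0}$ is not automatic and is precisely the model-specific content hidden behind the appeal to~\cite{DMR_04}.
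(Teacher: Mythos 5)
Your proposal follows essentially the same route as the paper: transfer to the autoregressive process with Markov regime via Lemmas~\ref{lem:eqinlaw} and~\ref{lem:bivariate}, establish positive Harris recurrence of $\{(\breve\w_k,Z_k)\}_{k\ge 0}$ (Proposition~\ref{prop:biv_Harris}), and verify that Assumptions~\ref{hyp:ballistic}--\ref{hyp:ident} imply conditions (A1)--(A5) of \cite{DMR_04}, whose Theorem~5 (handling the non-stationary initial law $\mu_{\ts}\otimes\delta_0$) then yields consistency, with the moment condition on the emission reduced exactly as you indicate to $\bEt(Z_0)=\EEt(R)-1<\infty$. The only noticeable divergence is the identifiability step, where the paper argues that positivity and $\theta$-independence of the emission density force a.s.\ equality of the latent finite-dimensional densities $\pi_\t(a_0,z_0)\prod_{i}\breve q_\t(a_{i-1},a_i)$, whereas you propose recovering $(\mu_\t,\breve q_\t)$ from conditional moments of $(Z_k)$; both are sketches of the same separating-family argument and conclude $\t=\t'$ via Assumption~\ref{hyp:ident}.
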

We now introduce the  conditions that will ensure asymptotic normality
of $\hat\theta_n$ under $\Pstar$.  In the following, for any function $\varphi :\Theta
\mapsto \R$, we let $\partial_\theta \varphi$ and
$\partial^2_\theta  \varphi$  denote  gradient  vector and  Hessian  matrix,
respectively. Moreover, $\|\cdot \|$ is  the uniform norm (of a vector
or a matrix). Again, next condition is classical and requires regularity of the mapping
  underlying the statistical model.

\begin{assu}
  \label{hyp:regular2}
For  all $(a,b)\in  S^2$,  the  map $\t  \mapsto  q_\t(a,b)$ is  twice
continuously differentiable on $\mathring\Theta$. Moreover, 
\begin{align*}
  \sup_{\t  \in   \mathring  \Theta}\sup_{a,b  \in   S}  \|\partial_\t
  \log q_\t(a,b)\| <+\infty , & \quad   \sup_{\t  \in   \mathring  \Theta}\sup_{a,b  \in   S}  \|\partial^2_\t  \log
  q_\t(a,b)\| <+\infty , \\
 \sup_{\t  \in   \mathring  \Theta}\sup_{a  \in   S}  \|\partial_\t
 \log \mu_\t(a)\| <+\infty  & \text{ and }   \sup_{\t  \in   \mathring  \Theta}\sup_{a  \in   S}  \|\partial^2_\t  \log \mu_\t(a)\| <+\infty .
\end{align*}
\end{assu}

Following the notation from Section 6.1 in~\cite{DMR_04}, we now introduce the
asymptotic Fisher  information matrix. 
We start by extending the chain $\{(\breve \w_k,Z_k)\}_{k\in\N}$ with indexes
in $\N$ to a stationary Markov chain $\{(\breve \w_k,Z_k)\}_{k\in\Z}$ indexed
by $\Z$. 
Let  us recall that  $\bPt$ and
$\bEt$  respectively  denote  probability  and expectation  under  the
stationary distribution  $\pi_\t$ of the  chain $\{(\breve \w_k,Z_k)\}_{k \ge
  0}$.
For any $k\ge 1, m\ge 0$, we let 
\begin{align*}
  \Delta_{k,m} (\t)
= &\bEt \Big(\sum_{i=-m+1}^k \partial_\t \log \breve q_\t(\breve \w_{i-1},\breve \w_i) \big|
Z_{-m}^k \Big) \\
&-\bEt  \Big(\sum_{i=-m+1}^{k-1}  \partial_\t  \log  \breve q_\t(\breve \w_{i-1}, \breve \w_i)
\big| Z_{-m}^{k-1} \Big). 
\end{align*}
 Note that this expression derives from Fisher identity stated in
\cite{Louis}.  Indeed, under general  assumptions, the  score function
equals the  conditional expectation of  the complete score,  given the
observed data. As the emission distribution $g$ does not depend on the
parameter  $\theta$, the  complete score  reduces  to a  sum of  terms
involving $\breve q_\t$ only. 

Lemma 10 in~\cite{DMR_04} establishes that for any $k\ge 1$, the
sequence      $(\Delta_{k,m}(\ts))_{m\ge     0}$      converges     in
$\mathbb{L}^2(\bPs)$ 
   to     some     limit    $\Delta_{k,\infty}(\ts)$.
 From this quantity, we may define 
\begin{equation}
  \label{eq:Fisher}
  I(\ts) = \bEs(\Delta_{0,\infty} (\ts) ^\intercal \Delta_{0,\infty} (\ts) ),
\end{equation}
where by convention $\Delta_{0,\infty} $ is a row vector and 
$u^\intercal$ is the transpose vector of $u$. Then, $ I(\ts)$ is
the  Fisher information  matrix of  the model.  We can  now  state the
asymptotic normality result.

\begin{theo}\label{thm:AN}
  Under  Assumptions~\ref{hyp:ballistic}   to~\ref{hyp:regular2},  if  the
  asymptotic    Fisher    information    matrix    $I(\ts)$    defined
  by~\eqref{eq:Fisher} is invertible, we have that 
\[
n^{-1/2}(\hat\theta_n-\ts) \mathop{\longrightarrow}_{n \to +\infty} \mathcal{N}(0,I(\ts)^{-1}), \quad \Pstar\text{-weakly}.
\]
\end{theo}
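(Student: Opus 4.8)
The plan is to deduce the statement from the asymptotic normality theorem for the maximum likelihood estimator in autoregressive processes with Markov regime established in~\cite{DMR_04} (their Section~6), by verifying that Assumptions~\ref{hyp:ballistic}--\ref{hyp:regular2} on the random walk imply the hypotheses imposed there on the hidden Markov model. By Lemma~\ref{lem:eqinlaw}, the law under $\Pstar$ of the MLE built from the observed left-step counts coincides with the law of the MLE built from $(Z_0,\dots,Z_n)$, so throughout we may and do work with the bivariate process $\{(\breve\w_k,Z_k)\}_{k\ge 0}$ of Lemma~\ref{lem:bivariate}. Two ingredients are already in place: the consistency of $\htet$ is Theorem~\ref{thm:consistency}, and the positive Harris recurrence of the bivariate chain, together with the explicit stationary density $\pi_\t$, is Proposition~\ref{prop:biv_Harris}. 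This last fact supplies the ergodicity of the latent chain underlying all the limit theorems below and legitimates working under the stationary law $\bPs$ when defining the Fisher information.

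The decisive simplification specific to our model is that the emission density $g_a(x,y)$ of~\eqref{eq:emission} does not depend on $\theta$. Consequently the complete-data score reduces to a sum of terms involving only $\partial_\t\log\breve q_\t$, and Fisher's identity~\citep{Louis} yields precisely the increments $\Delta_{k,m}(\t)$ displayed before~\eqref{eq:Fisher}. It then remains to check the regularity and domination conditions of~\cite{DMR_04}. First, the uniform ellipticity~\eqref{eq:S_subset} together with Assumption~\ref{hyp:noyau} gives the two-sided bound~\eqref{eq:breve_bound} on $\breve q_\t$, which is exactly the uniform minorization and majorization of the latent transition required for the geometric forgetting of the conditional chain. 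Second, writing $\log\breve q_\t(a,b)=\log\mu_\t(b)+\log q_\t(b,a)-\log\mu_\t(a)$ and differentiating, the uniform bounds of Assumption~\ref{hyp:regular2} on $\partial_\t\log q_\t$, $\partial^2_\t\log q_\t$, $\partial_\t\log\mu_\t$ and $\partial^2_\t\log\mu_\t$ transfer to uniform bounds on $\partial_\t\log\breve q_\t$ and $\partial^2_\t\log\breve q_\t$ over $\mathring\Theta\times S^2$. Since here these bounds are uniform rather than merely integrable (the state space $S\subseteq[\eps,1-\eps]$ stays bounded away from $0$ and $1$), the domination hypotheses of~\cite{DMR_04} hold in their strongest form, while Assumptions~\ref{hyp:regular} and~\ref{hyp:regular2} supply the required smoothness of $\t\mapsto\ell_n(\t,a_0)$.

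With these verifications, the machinery of~\cite{DMR_04} applies. Their Lemma~10, quoted above, gives the $\mathbb{L}^2(\bPs)$-convergence of $\Delta_{k,m}(\ts)$ to $\Delta_{k,\infty}(\ts)$, so that $(\Delta_{k,\infty}(\ts))_k$ is a square-integrable, stationary, ergodic martingale-increment sequence for the filtration generated by the observations. A martingale central limit theorem then yields
\[
n^{-1/2}\,\partial_\t\ell_n(\ts,a_0)\;\mathop{\longrightarrow}_{n\to\infty}\;\mathcal{N}\!\left(0,I(\ts)\right),\qquad \Pstar\text{-weakly},
\]
with $I(\ts)$ as in~\eqref{eq:Fisher}, while a uniform law of large numbers for the stationary bivariate chain gives convergence of the normalized observed information $-n^{-1}\partial^2_\t\ell_n(\t,a_0)$ to a limit $\mathcal{I}(\t)$ with $\mathcal{I}(\ts)=I(\ts)$, uniformly near $\ts$. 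A Taylor expansion of $\partial_\t\ell_n$ around $\ts$ evaluated at $\htet$, combined with the consistency of Theorem~\ref{thm:consistency} (which places $\htet$ in $\mathring\Theta$ eventually, so that $\partial_\t\ell_n(\htet,a_0)=0$) and the assumed invertibility of $I(\ts)$, produces the stated conclusion
\[
n^{-1/2}(\htet-\ts)\;\mathop{\longrightarrow}_{n\to\infty}\;\mathcal{N}\!\left(0,I(\ts)^{-1}\right),\qquad\Pstar\text{-weakly}.
\]

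The main obstacle is not any single limit theorem but the faithful translation of the RWRE hypotheses into the HMM hypotheses of~\cite{DMR_04}: one must confirm that every uniform-forgetting and domination condition they impose on the latent transition and on the parameter-derivatives of the log-likelihood is genuinely implied by Assumptions~\ref{hyp:ballistic}--\ref{hyp:regular2}. The two delicate points are the passage from $q_\t,\mu_\t$ to $\breve q_\t$ in the derivative bounds, and checking that the invertibility of $I(\ts)$ postulated here matches their non-degeneracy requirement; once these are settled, their results apply essentially verbatim.
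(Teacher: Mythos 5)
Your proposal is correct and follows essentially the same route as the paper: the paper's proof of Theorem~\ref{thm:AN} consists precisely in invoking Theorem~6 of \cite{DMR_04} after checking that their assumptions (A6)--(A8) hold for $\breve q_\theta$ under Assumption~\ref{hyp:regular2} (the earlier assumptions (A1)--(A5) having been verified in the consistency proof, including positive Harris recurrence via Proposition~\ref{prop:biv_Harris}). The additional material you supply --- the transfer of derivative bounds from $q_\t,\mu_\t$ to $\breve q_\t$, the triviality of the conditions on the emission density, and the sketch of the score CLT and Taylor expansion --- is a faithful unpacking of the machinery internal to \cite{DMR_04} rather than a different argument.
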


Note  that the  definition of  $ I(\ts)  $ is  not  constructive. In
particular, asymptotic normality of the MLE requires that $I(\ts)$ is invertible but this
may not  be ensured through  more explicit conditions on  the original
process. However,  the Fisher information may  be approximated through
the Hessian of  the log-likelihood. Indeed, Theorem~3 in~\cite{DMR_04}
states that the normalized  Hessian of the log-likelihood converges to
$-I(\ts)$ under stationary  distribution $\bPs$. Moreover, this result
is generalized to obtain convergence under non stationary distribution
$\Pstar$ (see the proof of Theorem 6 in that reference). Thus we have 
\begin{equation}
  \label{eq:estim_Fisher}
\frac 1 n \partial^2_\t \ell_n(\hat \t_n) \mathop{\longrightarrow}_{n \to +\infty} - I(\ts), \quad \Pstar-\text{a.s.}
\end{equation}
In practice, this  may be used to approximate  the asymptotic variance
of the estimator $\hat \t_n$, as illustrated in Section~\ref{sec:simus}.

\section{Illustration: examples and simulations} \label{sec:exsim}

\subsection{Computation of the likelihood}
\label{sec:comput}

The  computation of  the log-likelihood  relies  on the  following set  of
equations. As already noted, we have 
\begin{align}
  \ell_n(\theta,a)  =&\sum_{k=1}^n
  \log \Pt_{(a,0)}(Z_k|Z_0^{k-1}), \nonumber \\
=& \sum_{k=1}^n \log \left( \iint_{S^2}g_{b'}(Z_{k-1}, Z_k)
  \breve q_\t(b,b')\Pt_{(a,0)}(\breve \w_{k-1}=b|Z_0^{k-1})  db db'
\right) .\
\label{eq:lik_filter}
\end{align}
In this expression, the quantity 
\begin{equation}
  \label{eq:filter}
F^{\t,a}_k(\cdot) = \Pt_{(a,0)}(\breve \w_k=\cdot|Z_0^{k}) ,
\end{equation}
is the called the {\it prediction filter}. It is a probability distribution on
$S$ and it is computed through recurrence relations. Indeed, we have 
\begin{equation}
  \label{eq:filter_rec}
\left\{
  \begin{array}{l}
    F_0^{\t,a} = \delta_a, \\
F_{k+1}^{\t,a}    (b')   \propto    g_{b'}(Z_k,Z_{k+1})\int_{S}
\breve q_\t(b,b') F_{k}^{\t,a} (b) db , \quad k\ge 0, b'\in S, 
  \end{array}
\right. 
\end{equation}
where 
$\propto$ means
proportional to (up to a normalizing constant).

When $S$ is  discrete, the integral terms over $S$  reduce to sums and
computing  the prediction  filter recursively  enables to  compute the
log-likelihood of the observations, and  then the MLE. We illustrate these
computations  in the  case of  Example~\ref{ex:Markov_fini}  below as
well as in Section~\ref{sec:simus}.
When   $S$  is   continuous,  approximation   methods   are  required,
e.g.  particle filters  or Monte  Carlo expectation-maximisation (\texttt{em})
algorithms. We refer to Section 8 in~\cite{DMR_04} for more details.

Note that  in any case,  optimisation of the log-likelihood  is either
done through \texttt{em} 
algorithm~\citep{BPSW,DempsterLR} or by direct  optimisation procedures, as there is
no analytical expression for its 
maximiser. Thus, the computation of the gradient of this log-likelihood is
often     used    (e.g.     in    descent     gradient    optimisation
methods).  
As soon  as we can differentiate  under the integral  sign (which is valid under Assumption~\ref{hyp:regular2}), the gradient function
$\partial_\t \ell_n(\t, a)$ writes 
\begin{align}
& \partial_\t \ell_n(\t, a) = 
\Big( \iint_{S^2}g_{b'}(Z_{k-1}, Z_k)
 \breve q_\t(b,b')F_{k-1}^{\t, a} (b) db db' \Big) ^{-1} \nonumber \\
&\times \sum_{k=1}^n \Big( \iint_{S^2}g_{b'}(Z_{k-1}, Z_k)
 [ \partial_\t \breve q_\t(b,b') F_{k-1}^{\t, a} (b) + \breve q_\t(b,b') \partial_\t F_{k-1}^{\t, a} (b) ]
db db' \Big) .
\label{eq:gradient}
\end{align}
Note  that   the  gradient  of  the   prediction  filter  $\partial_\t
F_{k-1}^{\t, a} $ may be obtained through recurrence relations similar
to~\eqref{eq:filter_rec}. However,  these relations are  more involved
since the normalizing constant in~\eqref{eq:filter_rec} depends on $\t$ and
can not be neglected. 

To conclude this section, we mention that computing the Hessian of the
log-likelihood can be done in a  similar way.

\subsection{Examples}
\label{sec:examples}

In this  section, we provide  some examples of environments  $\bw$ and
check the assumptions needed  for consistency and asymptotic normality
of the MLE.

\begin{exam}
(Simple i.i.d. environment on two values.) Let $q_\t (a,b)= \mu_\t(b)$ and $\mu_\t(\cdot)=\mu_p(\cdot) =p\delta_{a_1} (\cdot) +(1-p)\delta_{a_2} (\cdot) $
with  known  values $a_1,a_2\in  (0,1)$  and  unknown parameter  $p\in
[\gamma,1-\gamma]\subseteq (0,1)$. 
\end{exam}

The  support  of  the
environment is  reduced to $S=\{a_1,a_2\}$.  Moreover,  we assume that
$a_1,a_2$ and $\Theta$ are such that the process is transient
to the right and ballistic.  
In the i.i.d. case, the
ballistic assumption (that also implies transience) reduces to $\EEt(\tilde \w_0) <1$ and thus to 
\[
p \frac{1-a_1}{a_1} + (1-p)\frac{1-a_2}{a_2} <1.
\]
The  log-likelihood of the  observations has  a very  simple form  in this
setup 
\[
\ell_n(p)  =  \sum_{k=1}^n  \log  \big[p a_1^{Z_{k-1}+1}(1-a_1)^{Z_k}  +  (1-p)
a_2^{Z_{k-1}+1} (1-a_2)^{Z_k} \big],
\]
and its maximiser $\hat \t_n=\hat p_n$ is obtained through numerical
optimisation. 
We refer to~\cite{Comets_etal,Falc_etal} for previous results obtained
in this setup. 

Assumptions~\ref{hyp:noyau} and \ref{hyp:ident} are satisfied as soon as $\Theta \subseteq
[\gamma,1-\gamma]$    and   $a_1\neq    a_2$,    respectively.   Moreover,
Assumptions~\ref{hyp:regular} and \ref{hyp:regular2} are automatically
satisfied. Indeed, for any $p\in \Theta$ and any $a\in S$, we have 
\begin{align*}
& | \partial_p \log \mu_p(a) |= \frac{1}{p\1\{a=a_1\}+(1-p)\1\{a=a_2\}} , \\ 
& | \partial_p^2 \log \mu_p(a) |= \frac{1}{p^2\1\{a=a_1\}+(1-p)^2\1\{a=a_2\}}.
\end{align*}
As a consequence,  Theorems~\ref{thm:consistency} and \ref{thm:AN} are
valid in this setup.

\begin{exam}\label{ex:Markov_fini}
  (Finite Markov chain environment.)
Let us assume that $S=\{a_1,a_2\}$ is fixed and known and the stationary
Markov chain $\bw$ is defined through its transition matrix 
 \begin{equation*}
Q_ \theta = 
  \begin{pmatrix}
    \alpha & 1-\alpha\\
1-\beta & \beta
  \end{pmatrix} ,
 \end{equation*}
where     the      parameter     is     $\theta=(\alpha,\beta)     \in
[\gamma,1-\gamma]^2$ for some $\gamma >0$ 
\end{exam}

Note that Assumption~\ref{hyp:noyau} is satisfied as soon as $\gamma>0$.  
The stationary measure of the Markov chain is given by 
\[
\mu_\t=\Big(           \frac         {1-\beta}           {2-\alpha-\beta},\frac
{1-\alpha}{2-\alpha-\beta} \Big ) .
\]
This is automatically a reversible Markov chain so that $\breve q_\theta = q_\theta$.
The transience condition writes
\[
(1-\beta)\log\left(\frac{1-a_1}{a_1}\right) + (1-\alpha)\log\left(\frac{1-a_2}{a_2}\right) <0 .
\]
Moreover, as soon as $\alpha \neq 1- \beta$ the sequence $\bw$ is non independent and the existence of a unique
  stationary measure for $\bw$ ensures the ballistic regime from transience assumption~\citep[Lemma 6.1 in][]{Alili}.
Let  us now  consider  the  log-likelihood expression  in  this setup.  As
already explained,  the key point  for computing the log-likelihood  in the
setup of  an autoregressive process with  Markov regime is  to rely on
the following additive form 
\begin{align*}
  \ell_n(\theta,a_1) = &\sum_{k=1}^n \log \Pt(Z_k|Z_0^{k-1},\w_0=a_1)\\
=& \sum_{k=1}^n \log \left( \sum_{b,b'\in S^2}g_{b'}(Z_{k-1}, Z_k)
  q_\t(b,b') F_{k-1}^{\t,a_1} (b) 
\right) ,
\end{align*}
where $F^{\t,a}_k$ is the prediction filter defined by~\eqref{eq:filter} and we used $\breve q_\theta=q_\theta$.
Relying  on matrix  notation, we  let $F_k^{\t,a}$  be the  row vector
$(F_k^{\t,a}(a_1), F_k^{\t,a}(a_2))$ while $G_{k}$ is the row vector 
$(g_{a_1}(Z_{k-1},Z_{k}),  g_{a_2}(Z_{k-1},Z_{k}))$  and $u^\intercal$
the transpose vector of $u$. Then we obtain 
\begin{equation*}
    \ell_n(\theta,a_1)   =   \sum_{k=1}^n   \log  \big[F_{k-1}^{\t,a_1}   Q_\t
  G_{k}^\intercal\big].
\end{equation*}
Moreover, the sequence of  prediction filters $\{F_k^{\t, a_1}\}_{0\le k
  \le n-1}$ is obtained through the
recurrence relations~\eqref{eq:filter_rec} that in our context, write as
\begin{equation*}
  \left\{
    \begin{array}{l}
      F_0^{\t,a_1} =(1,0)\\
F_{k+1}^{\t, a_1} \propto F_{k}^{\t,a} Q_\t \text{Diag}(G_{k+1}) .
    \end{array}
\right.
\end{equation*}
Now,   the   gradient   function  $\partial_\t   \ell_n(\t,a)$   given
by~\eqref{eq:gradient} satisfies the
following equations
\begin{equation*}
\left\{
  \begin{array}{l}
     \partial_\alpha \ell_n(\t,a) = \sum_{k=1}^n \Big[(\partial_\alpha F_{k-1}^{\t,a_1}
    Q_\t + F_{k-1}^{\t,a_1} Q'_1) 
  G_{k}^\intercal \Big] \Big(F_{k-1}^{\t,a_1} Q_\t
  G_{k}^\intercal \Big)^{-1}, \\
    \partial_\beta \ell_n(\t,a) = \sum_{k=1}^n \Big[(\partial_\beta F_{k-1}^{\t,a_1}
    Q_\t + F_{k-1}^{\t,a_1} Q'_2) 
  G_{k}^\intercal \Big] \Big(F_{k-1}^{\t,a_1} Q_\t
  G_{k}^\intercal \Big)^{-1}, 
  \end{array}
\right.
\end{equation*}
where $\partial_{i}  F_{k-1}^{\t,a_1}$ is  the row vector  with entries
$(\partial_{i}            F_{k-1}^{\t,a_1}(a_1),            \partial_{i}
F_{k-1}^{\t,a_1}(a_2))$ and 
\[
Q'_1 =   \begin{pmatrix}
    1 & -1\\
0 & 0
  \end{pmatrix} ,
\quad 
Q'_2=   \begin{pmatrix}
    0 & 0\\
-1 & 1
  \end{pmatrix} .
\]
Let us denote by $\boldsymbol{1}$ the row vector $(1,1)$. 
In  the current  setup, the  derivative  of the  prediction filter  is
obtained through $\partial_\alpha F_0^{\t,a_1}=\partial_\beta F_0^{\t,a_1}
=(0,0)$ and for any $k\ge 0$, 
\begin{align*}
  \partial_\alpha F_{k+1}^{\t, a_1} =&
\Big(  F_k^{\t,    a_1} Q_\t \text{Diag}(G_{k+1}) 
\boldsymbol 1 ^{\intercal} \Big)^{-1} \times 
\Big( \partial_\alpha F_k^{\t, a_1} Q_\t + F_k^{\t, a_1} Q'_1\Big) \text{Diag}(G_{k+1}) \\
&-  \frac {\Big[\Big(  \partial_\alpha F_k^{\t,  a_1} Q_\t  + F_k^{\t,  a_1} Q'_1\Big)
\text{Diag}(G_{k+1}) \boldsymbol 1 ^{\intercal} \Big]} 
{\Big(  F_k^{\t,    a_1} Q_\t \text{Diag}(G_{k+1}) 
\boldsymbol 1 ^{\intercal}\Big)^{2}}
\times F_k^{\t, a_1} Q_\t \text{Diag}(G_{k+1}) ,
\end{align*}
and a similar equation holds for $ \partial_\beta F_{k+1}^{\t, a_1} $.

In  Section~\ref{sec:simus},  we  provide  an
  illustration of the numerical performances of the maximum likelihood estimator
  in this setup. Note that  second order derivatives of the prediction
  filter and thus the log-likelihood are obtained similarly. These are
  used to estimate the asymptotic covariance matrix of the MLE in Section~\ref{sec:simus}.

To conclude this section, note that the regularity assumptions~\ref{hyp:regular}
and~\ref{hyp:regular2} are  satisfied, as well  as the identifiability
condition~\ref{hyp:ident},  as soon as  $a_1\neq a_2$  and $\alpha\neq
\beta$. As a consequence, Theorems~\ref{thm:consistency} and \ref{thm:AN} are
valid in this setup.

\begin{exam}(DNA unzipping.)
We consider the context of DNA unzipping studied  in \cite{Balda_06} where
the goal is the sequencing of a molecule \citep[see also][]{Andreo}. The physical experiment
consists in observing many different  unzippings of a DNA molecule which, due to
its physical properties, may naturally (re)-zip.   
 In this context, the random walk $\bX$ represents the position of the
 fork at each  time $t$ of the experiment,  or equivalently the number
 of currently unzipped bases of the molecule. In the previous works, the authors are interested in the  observation of many
  finite trajectories of the random walk in this random environment. 
Here, we consider the different problem of a single unzipping of a
   sufficiently long molecule.
\end{exam}

Let $\A=\{A,C,G,T\}$ denote the finite nucleotide alphabet.  
The sequence of bases
$\{b_x\}_{1\le x \le n } \in \A^n $ of the (finite length) molecule are unknown and
induce a specific environment that  will be considered as random. More
precisely, the conditional transitions of the random walk are given by 
  \begin{equation*} 
 \w_x= \frac{1}{1+\exp(\beta g({x},{x+1}))}.
 \end{equation*}
where $g({x},{x+1}):=g_0(b_{x},b_{x+1})-g_1(f)$.   The known parameter
$g_1(f)$ is the work to stretch under a force $f$ the open part of the
two  strands,  it   can  be  adjusted  but  is   constant  during  the
unzipping. Parameter  $\beta>0$ is also known and  proportional to the
inverse of temperature. The quantity $g_0(b_{x},b_{x+1})$ is the binding energy that takes into account additional stacking effects and therefore depends on the base values 
at the $(x+1)$-th and also at the $x$-th positions.  
Table~\ref{tab1} gives these binding energies at room temperature \citep[see][]{Balda_06}.
\begin{table}[h]
 \begin{center}
\begin{tabular}{|l|l|l|l|l|}
\hline   $g_0$ & A & T & C & G   \\
\hline   A  &  1.78 & 1.55  & 2.52 & 2.22  \\
\hline
 T  &  1.06 & 1.78  & 2.28 & 2.54  \\
\hline
 C  &  2.54 & 2.22  & 3.14 & 3.85  \\
\hline
 G  &  2.28 & 2.52  & 3.90 & 3.14  \\
\hline
\end{tabular} 
\end{center}
\caption{Binding free energies (units of $k_BT$).} 
\label{tab1}
\end{table}
To take into account this dependence between energies, we assume that
$\{g_0(x):=g_0(b_{x},b_{x+1})\}_{x \geq 1}$ is a Markov chain. With
this assumption and since the mapping $g_0(x) \mapsto \w_x$ is one-to-one, $\bw=(\omega_x)_{x \geq 1}$ is Markov as well.  The
parameter of  the model is  thus the transition matrix  $Q_\t$ between
the binding energies.  Note that while the set of dinucleotides has
cardinality $16$, function $g_0$ takes only 10 different values.
So random environment $\bw$ takes values in $S$ with cardinality 10 
and the underlying transition matrix $Q_\t$ (for the binding energies)
is of size $10\times 10$ but has many
zero  entries

 The ballistic condition is not difficult to satisfy. Indeed, we have 
 \[
 \tilde \omega_x= \exp(\beta (g_0(x)-g_1(f)))
 \]
 and $g_1$ is increasing with $f$. Thus we may  choose $f$ such that $g_1$
 is large enough to ensure  either $\EEt(\tilde \omega_0)<1$ 
if the sequence $\{g_0(x)\}_{x\ge 1}$ is only i.i.d. or to ensure $\EEt(\log \tilde \omega_0)<0$  when
 the sequence $\{g_0(x)\}_{x\ge 1}$ is not independent. In both cases, this ensures the ballistic regime.

In this context and for a long enough sequence, we can estimate the matrix $Q_\t$ of the transitions
between  the different  binding  energies, as  well as  $\mu_{\theta}$
which gives the frequencies of  appearance of the binding energies. In
turn, this also  gives the frequencies of appearance  of certain base
pairs thanks to Table~\ref{tab1}.
Since  both  parameter space  $\Theta$  and  state  space $S$  are
  finite,                Assumptions~\ref{hyp:noyau},~\ref{hyp:regular}
  and~\ref{hyp:regular2}  are satisfied.   This is  also the  case for
  identifiability assumption~\ref{hyp:ident}. As a consequence, Theorems~\ref{thm:consistency} and \ref{thm:AN} are
valid in this setup.

\begin{exam} (Auto-regressive environment.) Let $y_0\sim \mu_\t$ and
  for any $n\ge 0$, we let $y_{n+1}=\alpha y_n+u_n$ 
  where $\alpha \in \R$ and $(u_n)_{n\ge 0}$
  is an i.i.d. sequence. Fix some $\eps >0$. The environment $\bw$
  is defined on $S=[\eps,1-\eps]$ through a truncated logistic function 
\[
\w_n = \phi_\eps(y_n):= \left\{
  \begin{array}[h]{ll}
  e^{y_n} (1+e^{y_n})^{-1}& \text{if } e^{y_n} (1+e^{y_n})^{-1}\in S ,\\
\eps & \text{if }  e^{y_n} (1+e^{y_n})^{-1} \le \eps,\\
1-\eps & \text{if }  e^{y_n} (1+e^{y_n})^{-1}\ge 1-\eps.
  \end{array}
\right.
\]
\end{exam}

A time reversibility condition on first-order autoregressive processes is
studied in \cite{Osawa}. 
If  we assume  that $u_n$  has Gaussian  distribution,  say $u_n
\sim\mathcal{N}(\mu,\sigma^2)$, then for any value $|\alpha|<1$, it is
easily seen that 
there exists a stationary density $\mu_\t$ for $(y_n)_{n \ge 0}$ given by 
\[
\forall y \in \R, \quad \mu_\t (y)= \left( \frac {1-\alpha^2} {2\pi
    \sigma^2} \right)^{1/2}
\exp\left[ -\frac {1-\alpha^2} {2\sigma^2}
\Big\{ y- \frac{\mu } {1-\alpha}\Big\}^2 \right], 
\]
where $\theta=(\alpha,\mu,\sigma^2)$ is the model parameter. Moreover,
the process $(y_n)_{n \ge 0}$ is reversible w.r.t. this stationary distribution. 
Then   $(\w_n)_{n\ge   0}$   is   also   stationary   and time  reversible.  

Note   that   the   inverse   function   $\phi_\eps^{-1}   :   S   \to
[\log(\eps/(1-\eps)), \log((1-\eps)/\eps)]$ is well defined and
given by 
\[
\forall  a   \in  S,  \quad  \phi_\eps^{-1}(a)   =\log  \left(\frac  a
  {1-a}\right) .
\]
The   transience  condition   writes  $\EEt(y_0)>0$   or  equivalently
$\mu>0$.  As soon as $\alpha \neq 0$, the sequences $(y_n)_{n \ge 0}$ and thus also $(\w_n)_{n\ge 0}$ are non
  independent and the existence of a unique stationary distribution implies the ballistic regime from transience
  assumption \citep[Lemma 6.1 in][]{Alili}.
Now, the transition density of $\bw$ is given by 
\[
q_\t(a,b) = \frac 1 {\sqrt{2\pi}\sigma b(1-b)}
\exp\Big( - \frac 1 {2 \sigma^2}
 (\phi_\eps^{-1} (b) -\alpha \phi_\eps^{-1}(a) -\mu)^2 \Big).
\]
As a  consequence, Assumption~\ref{hyp:noyau} is satisfied  as soon as
$\sigma^2$ and  $\mu$ are bounded.  Thus we assume that  the parameter
space satisfies 
\[
\Theta     =     \A\times    [\mu_{\text{min}},\mu_{\text{max}}]\times
[\sigma_{\text{min}},\sigma_{\text{max}}] , 
\]
where $\A$ is a compact subset of $(-1,1)$ and the constants satisfy $
\mu_{\text{min} } >C(\eps)+\sigma^2_{\text{max}}/2$ and 
$\sigma_{\text{min}} >0$.  Moreover, regularity assumptions~\ref{hyp:regular}
and \ref{hyp:regular2} are also  satisfied, as well as identifiability
condition~\ref{hyp:ident}.
As a consequence, Theorems~\ref{thm:consistency} and \ref{thm:AN} are
valid in this setup.


\subsection{Numerical performance}
\label{sec:simus}

In  this section,  we illustrate  our results  in the  simple  case of
Example~\ref{ex:Markov_fini}. We start by describing the experiment. 
The support of the environment is fixed to $S=\{0.4,0.8\}$. The true
parameter value is chosen as $(\alpha^\star,\beta^\star)=(0.2,0.9)$.  These
choices  ensure transience  of the  walk as  well as  ballistic regime. Next,  we repeat  100  times the
following procedure. 
We    simulate    a    RWRE    under   the    model    described    in
Example~\ref{ex:Markov_fini}        with        parameter       values
$(\alpha^\star,\beta^\star)$ and  stop it successively  at the hitting
times $T_n$, with $n  \in \{10^3 k ;1 \le k \le  10\}$. For each value
of $n$, the likelihood is computed as detailed in the previous section
and we  compute the  MLE $(\hat  \alpha_n,\hat  \beta_n)$ through
numerical optimisation of this likelihood.  The likelihood optimisation procedure is performed according to the “L-BFGS-B” method of \cite{ByrdEtAl}.
It is worth  mentioning that the length of the random  walk is not $n$
but rather $T_n$, a quantity that  is much larger in practice, see e.g
Section 5.2 in~\cite{Comets_etal}.
Figure~\ref{fig:boxplots} shows the boxplots of the MLE obtained
from $M=$100 iterations of the procedure and increasing values of $n$. The
red horizontal dotted line shows the true parameter value. 
As expected, the MLE converges to the true value as $n$ increases. 

\begin{figure}[h]
  \centering
  \includegraphics[height=4cm,width=\textwidth]{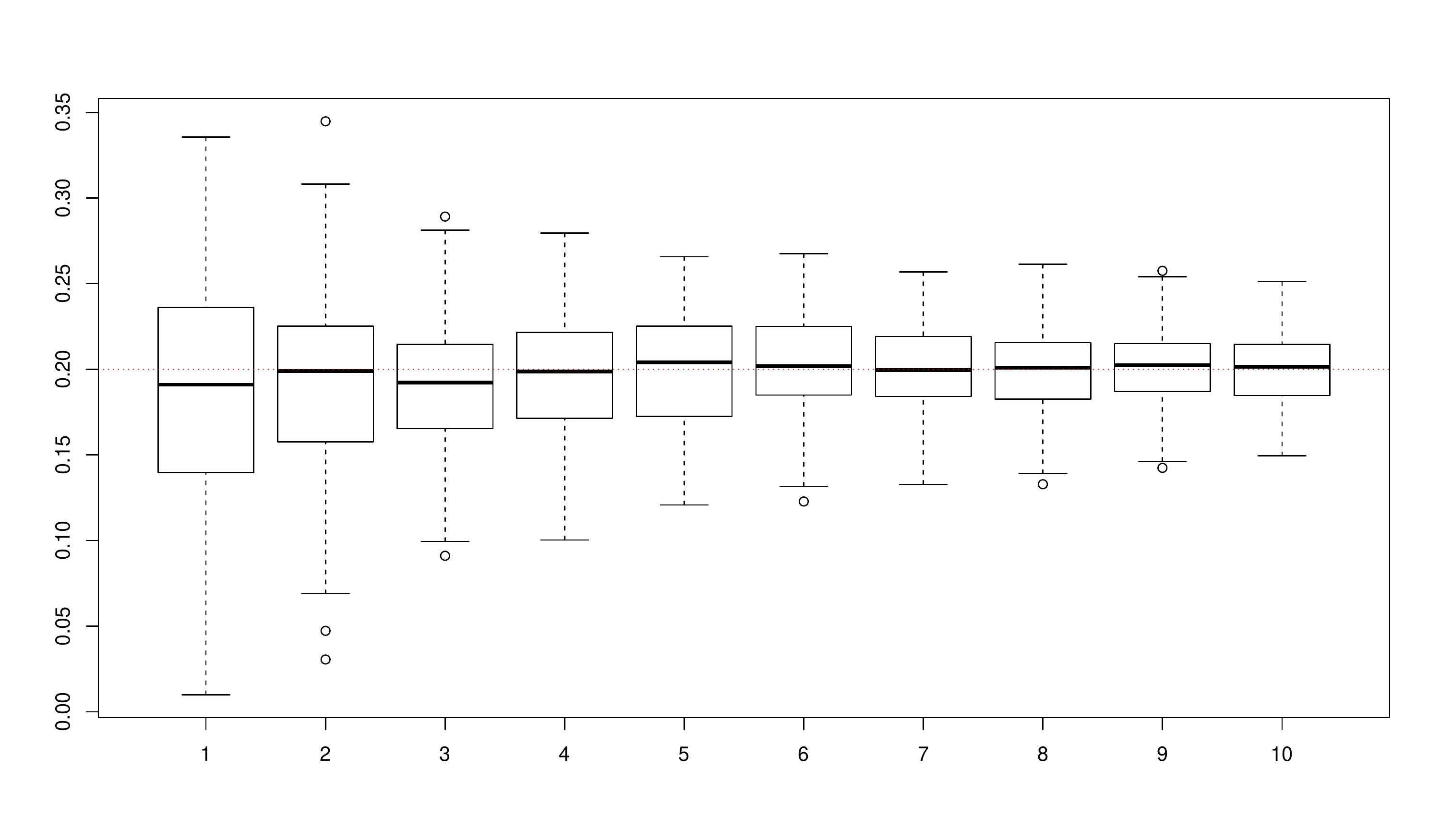}
  \includegraphics[height=4cm,width=\textwidth]{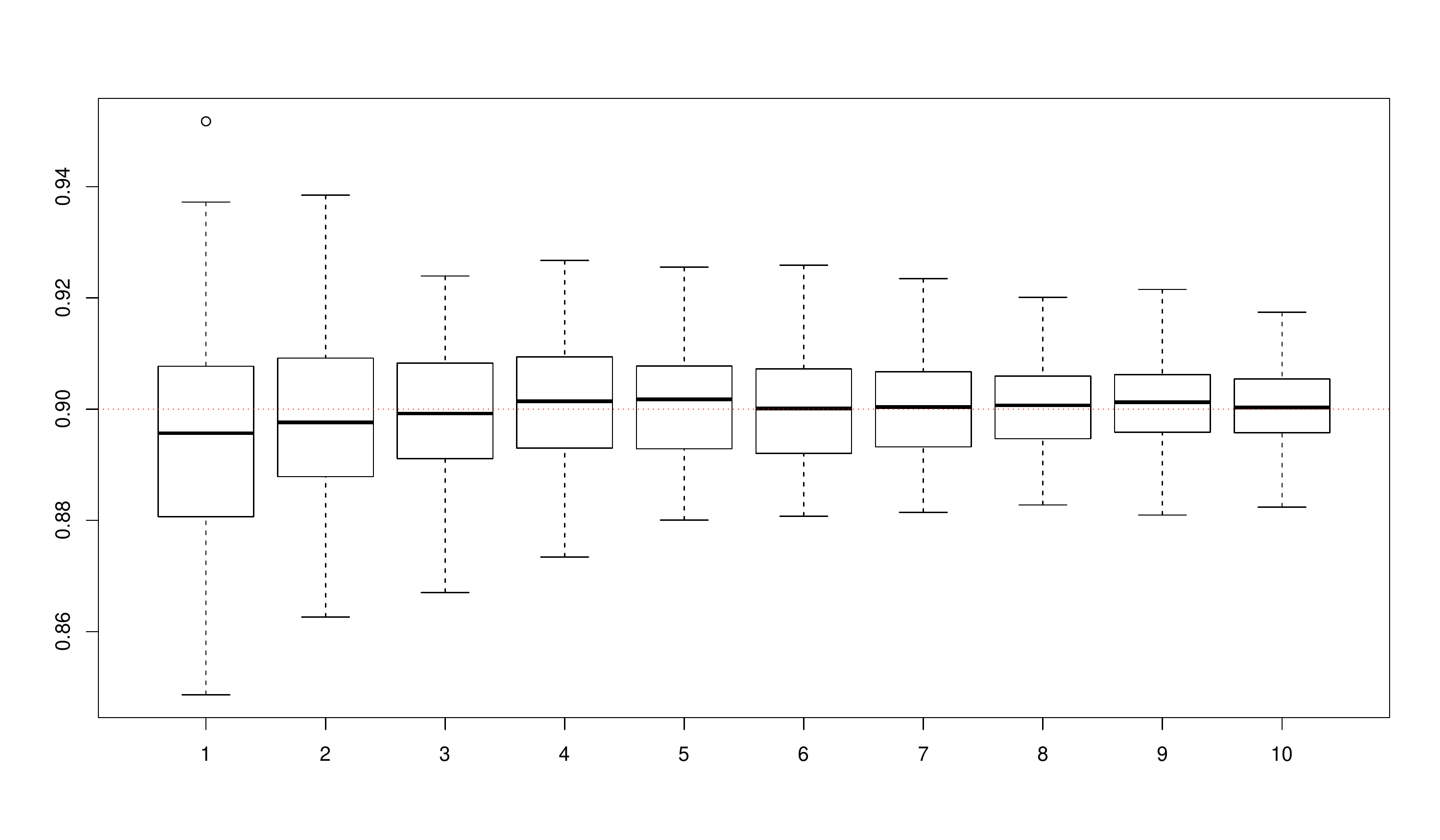}
  \caption{Boxplots of MLE obtained from $M=$100
    iterations and  for values $n$ ranging  in $\{10^3 k ;1  \le k \le
    10\}$ ($x$-axis indicates the value $k$). First and second panel display estimation of
    $\alpha^\star$ and $\beta^\star$, respectively. The true values are indicated by horizontal lines.}
  \label{fig:boxplots}
\end{figure}

We  further  explore  estimation  of the  asymptotic  covariance  matrix
through    the    Hessian     of    the    log-likelihood    according
to~\eqref{eq:estim_Fisher}. Note that  the true value $I(\ts)^{-1}$ is
unknown as there is no constructive form of the Fisher information for
this  model. However,  this  true  value may  be  approximated by  the
empirical   covariance  matrix   obtained  from   running   the  above
experiment with  $M=$100 iterations. Figure~\ref{fig:variances}  shows the
boxplots of the entries of the opposite normalized Hessian of the log-likelihood at
the estimated parameter value, namely 
\[
\hat \Sigma_n := - \frac 1 n \partial_\t^2 \ell_n(\hat \theta),
\]
 obtained  by iterating the  procedure $M=$100  times. The
red  horizontal  dotted  line   does  not  represent  the  entries  of
$I(\ts)^{-1}$ (which remain unknown even if $\ts$ is known) but rather
the entries of the empirical covariance estimate matrix
\[
\widehat {Cov}(\hat \theta_n) :=\frac 1 M \sum_{i=1}^M \Big(\hat \theta_n^{(i)}-\frac 1 M \sum_{i=1}^M \hat
\theta_n^{(i)} \Big) ^\intercal \Big(\hat \theta_n^{(i)}-\frac 1 M \sum_{i=1}^M \hat
\theta_n^{(i)} \Big) ,
\]
where  $\hat  \theta_n^{(i)}$  is  the estimator  obtained  at  $i$-th
iteration.  We  choose  the  most  accurate  estimator  obtained  with
$n=10,000$.  The  results  obtained  are quite  good.

\begin{figure}[h]
      \centering
  \includegraphics[height=8cm,width=\textwidth]{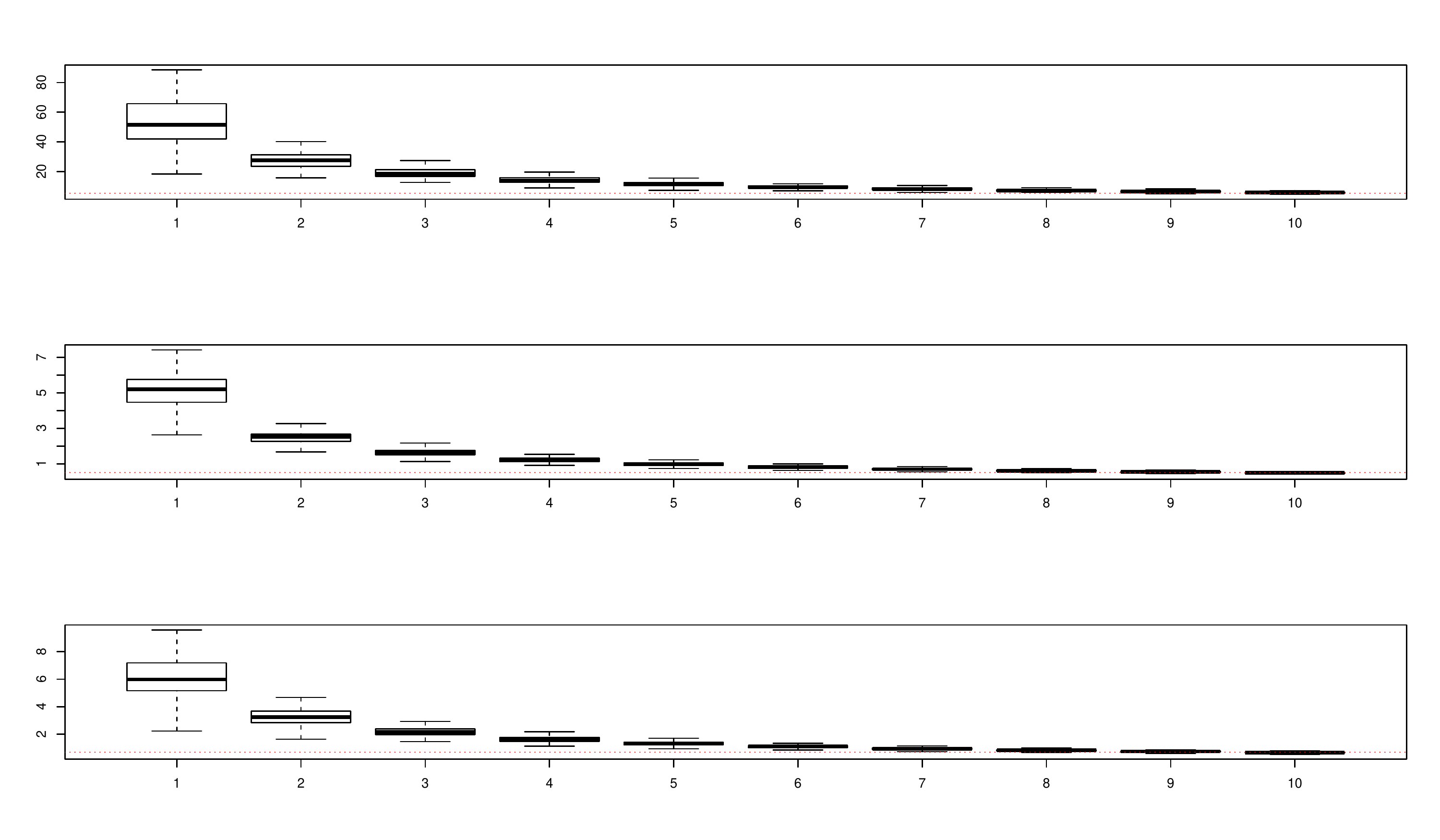}
      \caption{Boxplots of the entries of $\hat \Sigma_n$ obtained from $M=100$ iterations and for values $n$ ranging in $\{10^3 k ;1 \le k \le
        10\}$ ($x$-axis indicates the value $k$). From top to bottom:
        second derivative with  respect to $\alpha$, second derivative
        with respect to $\beta$  and second derivative with respect to
        $\alpha$ and $\beta$. The red dotted line is the empirical estimate
        of the covariance matrix entries 
        obtained  from  $M=100$ iterations  for the  largest value
        $n=10,000$. From top to bottom: $\widehat {Var}(\hat \alpha_n), \widehat{Var}(\hat
        \beta_n), \widehat{Cov}(\hat \alpha_n,\hat \beta_n)$.}
  \label{fig:variances}
    \end{figure}

To conclude  this section, we consider the  construction of confidence
regions for $(\alpha^\star,\beta^\star)$. The  asymptotic
normality  of   the  estimator  $\hat  \theta_n$   together  with  the
estimation  of  the asymptotic  variance  $I(\ts)^{-1}$  leads to  the
following confidence region 
\[
\mathcal{R}_{\gamma,n}  :=\{\theta  \in  \Theta  ,  n  (\hat  \theta_n
-\t)^\intercal \hat \Sigma_n 
(\hat \theta_n -\t) \le \chi_{1-\gamma} \},
\]
where $1-\gamma$  is the asymptotic  confidence level and  $\chi_z$ is
the $z$-th quantile  of the chi-square distribution with  2 degrees of
freedom. Table~\ref{tab:IC} presents  the empirical coverages obtained
from  these confidence  regions $\mathcal{R}_{\gamma,n}$  with $M=100$
iterations and for
$\gamma\in\{0.01,0.05,0.1\}$ and $n$ ranging in $\{10^3 k ;1 \le k \le
        10\}$.  For  the  values  $n\le  6,000$ we  observe  that  the
        confidence regions are too wide. However, for the large values
        $n\ge 9,000$ the empirical coverages are quite good.

\begin{table}[ht]
\centering
\begin{tabular}{|c|ccc|}
  \hline
 $n$ & 0.01& 0.05 & 0.1 \\ 
  \hline
1000 & 1.00 & 1.00 & 1.00 \\ 
  2000 & 1.00 & 1.00 & 1.00 \\ 
  3000 & 1.00 & 1.00 & 1.00 \\ 
  4000 & 1.00 & 1.00 & 1.00 \\ 
  5000 & 1.00 & 1.00 & 0.99 \\ 
  6000 & 1.00 & 0.99 & 0.98 \\ 
  7000 & 1.00 & 0.98 & 0.97 \\ 
  8000 & 0.99 & 0.98 & 0.95 \\ 
  9000 & 0.98 & 0.97 & 0.96 \\ 
  10000 & 0.99 & 0.95 & 0.92 \\ 
   \hline
\end{tabular}
\caption{Empirical coverage of $(1-\gamma)$ asymptotic level confidence
  regions, with $\gamma\in \{0.01,0.05,0.1\}$.} 
\label{tab:IC}
\end{table}


\section{Proofs}\label{sec:proofs} 
\subsection{Properties of the underlying HMM}\label{sec:propr}
In this section, we investigate the properties of the bivariate process
$\{(\breve \w_k,Z_k)\}_{k \ge 0}$, namely we  show that it is positive Harris
recurrent and we exhibit its invariant distribution. 
Let us first define $\breve R$ for the time reversed environment $\breve \bw$ similarly as $R$ from Equation~\eqref{eq:R} by 
\begin{equation}
  \label{eq:breve_R}
 \breve R=(1+\tilde \w_{-1}+\tilde{\w}_{-1}\tilde\w_{-2} +\dots).
\end{equation}
We first remark that Condition~\eqref{eq:ballistic} writes the same for $\bw$ and $\breve \bw$ so that environment
$\breve \bw$ is ballistic under Assumption~\ref{hyp:ballistic} and thus $\EEt(\breve R)<+\infty$.
Moreover,  under Assumptions~\ref{hyp:ballistic} and~\ref{hyp:noyau}, we obtain the following uniform ballistic condition on
  the time reversed environment
\begin{equation}
\label{eq:ballis_extended}
 1  \le \inf_{a\in S}  \EEt_a(\breve R)\le \sup_{a\in  S} \EEt_a(\breve R)
 \le c_+<\infty , 
\end{equation}
for some positive and finite constant $c_+$. 
Indeed, the lower bound follows from $\breve R\ge 1$, by definition of
$R$. Now, for any $a \in (0,1)$, we let $\tilde a
=(1-a)/a$. The upper bound is obtained through 
\begin{align*}
\EEt_a(\breve R) &=1+\EEt_a [\tilde\w_{-1}\EEt_{\w_{-1}}(\breve R) ]
=1+\int_S \tilde b\EEt_b (\breve R ) \breve q_\t(a,b) db\\
&\leq 1+\frac{(1-\eps)\sigma_+}{\eps\sigma_-}\EEt (\breve R),
\end{align*}
where the first equality above is the strong Markov property and the inequality
 uses both~\eqref{eq:S_subset} and the lower bound~\eqref{eq:stat_bound} on the stationary
distribution $\mu_\t$.

The following proposition states the recurrence result on the Markov chain $\{(\breve \w_k,Z_k)\}_{k\ge 0}$ and gives an expression for the density $\pi_\t$ of the corresponding invariant distribution.

\begin{prop}
\label{prop:biv_Harris}
Under Assumptions~\ref{hyp:ballistic} and~\ref{hyp:noyau},  the  Markov chain  $\{(\breve\w_k,Z_k)\}_{k\ge 0}$ whose transition kernel is given by~\eqref{eq:bikernel} is positive Harris recurrent and aperiodic with  invariant density distribution $\pi_\t$ given by 
\[
\forall (a,x)\in S\times \N, \quad
\pi_\t(a,x) =\mu_\t(a)\EEt_a(R^{-1}(1-R^{-1})^x). 
\]
\end{prop}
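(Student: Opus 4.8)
The plan is to treat the two assertions separately: first that the chain with kernel $\Pi_\t$ is positive Harris recurrent and aperiodic, and then that $\pi_\t$ as displayed is its (necessarily unique) invariant density. For the recurrence I would run the standard Meyn--Tweedie program: exhibit a reference measure making the chain $\psi$-irreducible, produce a small set, check aperiodicity, and verify a Foster--Lyapunov drift towards that small set. For the second part I would simply check that the proposed $\pi_\t$ solves the balance equation $\pi_\t\Pi_\t=\pi_\t$ and is a probability density; uniqueness of the invariant law, granted by the recurrence, then identifies it.

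Irreducibility and the small set come from uniform ellipticity. From any state $(a,x)$ one reaches the slice $S\times\{0\}$ in one step, since $\Pi_\t((a,x),(b,0))=\breve q_\t(a,b)g_b(x,0)=\breve q_\t(a,b)b^{x+1}$, which by~\eqref{eq:breve_bound} and~\eqref{eq:S_subset} is bounded below by $(\sigma_{-}^2/\sigma_{+})\eps^{x+1}$ times the reference measure $\lambda$ on $S$ (Lebesgue or counting). Hence for fixed $N$ the set $C=S\times\{0,\dots,N\}$ satisfies the one-step minorization $\Pi_\t((a,x),\cdot)\ge (\sigma_{-}^2/\sigma_{+})\eps^{N+1}(\lambda\otimes\delta_0)(\cdot)$ for $(a,x)\in C$, so $C$ is small and the chain is $\psi$-irreducible with $\psi=\lambda\otimes\delta_0$. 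Aperiodicity follows because the minorizing measure charges $S\times\{0\}\subseteq C$, equivalently because $\Pi_\t((a,0),(b,0))=\breve q_\t(a,b)b>0$ gives positive probability of remaining in the slice.

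I expect the drift to be the main creative step, because the naive choice $V(a,x)=x$ fails: the conditional mean $\Et_{(a,x)}[Z_1]=(x+1)\int_S\breve q_\t(a,b)\tilde b\,db$ need not contract, as the local environment may be unfavourable and no pointwise bound $\int_S\breve q_\t(a,b)\tilde b\,db<1$ is available. The fix is to weight by the environment through $\breve R$, taking $V(a,x)=(x+1)\EEt_a(\breve R)$; this is exactly why the excerpt records the identity $\EEt_a(\breve R)=1+\int_S\breve q_\t(a,b)\tilde b\,\EEt_b(\breve R)\,db$ and the uniform bound~\eqref{eq:ballis_extended}. Using $\sum_y g_b(x,y)(y+1)=(x+1)\tilde b+1$ together with that identity, a direct computation gives $\Pi_\t V(a,x)=V(a,x)-(x+1)+\int_S\breve q_\t(a,b)\EEt_b(\breve R)\,db\le V(a,x)-(x+1)+c_+$, so the drift is at most $-1$ once $x>c_+$. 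Since $1\le V<\infty$ and the sublevel set $\{x\le c_+\}$ is of the form $C$ above, hence small, Foster's criterion yields positive recurrence, and with $\psi$-irreducibility and aperiodicity this gives positive Harris recurrence.

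For the invariant density I would verify $\pi_\t\Pi_\t=\pi_\t$ by hand. Writing $\pi_\t(a,x)=\mu_\t(a)\nu_a(x)$ and using the reversibility identity $\mu_\t(a)\breve q_\t(a,b)=\mu_\t(b)q_\t(b,a)$, the balance equation collapses to the forward-environment fixed point $\nu_b(y)=\int_S q_\t(b,a)\sum_x\nu_a(x)g_b(x,y)\,da$. Since conditionally on the environment $Z$ is a branching process with geometric offspring and immigration, one step composes probability generating functions through $f_b(s)=b/(1-(1-b)s)$, and the geometric form is preserved: a geometric law feeding one such step returns a geometric law whose parameter obeys the affine recursion governing $R$, namely $r\mapsto 1+\tilde\w\,r$. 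Recognizing $\nu_a$ as the corresponding $R$-mixture of geometrics, the fixed point reduces to the defining relation of $R$ and holds. Finally $\pi_\t$ is a probability density, since $\sum_x\nu_a(x)=\EEt_a\big(\sum_x R^{-1}(1-R^{-1})^x\big)=1$ and $\int_S\mu_\t=1$; uniqueness from the recurrence part then shows $\pi_\t$ is the invariant density.
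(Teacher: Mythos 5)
Your proposal is correct and follows the same overall route as the paper: a Meyn--Tweedie drift argument for recurrence together with a direct verification that the displayed $\pi_\t$ solves the balance equation, with uniqueness supplied by the recurrence. The local differences are worth recording. For the drift you take $V(a,x)=(x+1)\EEt_a(\breve R)$ and, via $\sum_y g_b(x,y)(y+1)=(x+1)\tilde b+1$ and the identity $\EEt_a(\breve R)=1+\int_S\breve q_\t(a,b)\tilde b\,\EEt_b(\breve R)db$, obtain the strict bound $\Pi_\t V\le V-(x+1)+c_+$, i.e.\ Foster's criterion off a small set, which delivers positivity directly; the paper uses $V(a,x)=x\,\EEt_a(\breve R)$, gets only $\Delta V\le 0$ outside $S\times\sem{0,c_+-1}$ (hence Harris recurrence via Theorem~9.1.8 of Meyn and Tweedie), and recovers positivity from the exhibited invariant probability --- your version is marginally cleaner logically. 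Your one-step minorization onto the slice $S\times\{0\}$ gives $\psi$-irreducibility, the small set and strong aperiodicity in a single stroke, where the paper argues irreducibility through hitting probabilities and aperiodicity through the marginal environment chain; both work. For the invariance check, the paper carries out exactly the computation you sketch (Fubini, the negative binomial generating function, then the Markov property and the shift operator), so your probability-generating-function phrasing is the same argument in different clothes. One caution for when you write that step out in full: the affine recursion $r\mapsto 1+\tilde\w\, r$ closes the fixed point only if the geometric parameter attached to the environment state $a$ is the series begun at $\tilde\w_0=\tilde a$, i.e.\ $1+\tilde a R$ with $R$ as in~\eqref{eq:R} (this is also what the paper's own Markov-property step implicitly uses when it identifies $1+\tilde b\,(R\circ\tau^1)$ with $R$ under $\PPt_b$); with the literal convention that $\EEt_a$ starts the chain at $\w_0=a$ and $R$ begins at $\tilde\w_1$, the mixture parameter would not depend on $a$ in the i.i.d.\ case and the balance equation would not close, so the indexing is not a cosmetic point.
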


\begin{proof}
Note that $\pi_\t$ is indeed a density. 
We first prove that it is the density of an invariant distribution. Thus we want to establish that for any $(b,y)\in S\times \N$, we have
\begin{equation}
  \pi_\t(b,y) = \sum_{x\in \N}\int_{S} \pi_\t(a,x)\Pi_\t((a,x),(b,y))da .
  \label{eq:invariance}
\end{equation}
We  start by considering the right-hand side of the above equation where we input the expressions for density $\pi_\t$ and kernel $\Pi_\t$. We let 
\begin{align*}
T & = \sum_{x\in \N}\int_{S} \pi_\t(a,x)\Pi_\t((a,x),(b,y))da \\
 &=    \sum_{x\in     \N}\int_{ S}    \mu_\t(a)\EEt_a[R^{-1}(1-R^{-1})^x]
 \binom{x+y}{x} \breve q_\t (a,b) b^{x+1}(1-b)^y da . 
\end{align*}
From the definition of $\breve q_\theta$ and using  Fubini's theorem for positive functions, we get 
\begin{align*}
T &= \mu_\t(b) \int_{S} q_\t(b,a)(1-b)^y \sum_{x\in \N} \binom{x+y}{x} \EEt_a[R^{-1}(1-R^{-1})^x] b^{x+1} da \\
&= \mu_\t(b) \int_{S} q_\t(b,a)(1-b)^y \EEt_a\left[ \frac {R^{-1}b} {[1-b(1-R^{-1})]^{y+1}} \right] da\\
&= \mu_\t(b) \int_{ S} q_\t(b,a) \EEt_a\left[ \frac{1}{1+ \tilde b
    R}
  \times \left( \frac{1-b}{1-b+bR^{-1}} \right)^y \right] da.
\end{align*}
Now,  applying  Markov's property  and  the  definition  of the  shift
operator, we obtain 
\begin{align*}
T &=\mu_\t(b)\EEt_b\left (\EEt_{\w_1}\left[ \frac{1}{1+ \tilde b
    R} \times
    \left( \frac{1-b}{1-b+bR^{-1}} \right)^y \right]\right )\\
&=\mu_\t(b)\EEt_b\left   (\EEt_{b}\left[  \frac{1}{1+ \tilde b
    R}    \times \left( \frac{\tilde  b R}{1+\tilde b R} \right)^y
    \circ \tau^1 \Big|\F_1 \right]\right )\\
&=\mu_\t(b)\EEt_b\left  (\EEt_{b}\left[  \frac{1}{1+\tilde b+\tilde  b
      \w_1+\ldots}   \times  \left(   \frac{\tilde   b+\tilde  b\tilde
        \w_1+\ldots}{1+\tilde b+\tilde  b\tilde \w_1+\ldots} \right)^y
    \Big| \F_1 \right]\right )\\
&=\mu_\t(b)\EEt_b(R^{-1}(1-R^{-1})^y).
\end{align*}
This concludes the validity of~\eqref{eq:invariance}.

As the  marginal process $\{\w_k\}_{k\ge 0}$ is  aperiodic, this is
also the case for the time reversed marginal process $\{\breve \w_k\}_{k\ge 0}$ and for the bivariate process
$\{(\breve \w_k,Z_k)\}_{k\ge 0}$. 
Following Theorem~9.1.8 in \cite{Meyn_Tweedie}, we want to prove that the Markov chain $\{(\breve \w_k,Z_k)\}_{k\geq 0}$ is $\psi$-irreducible for some probability measure $\psi$ and that there exists a petite set $C\in S\times \N$ and a function $V:S\times \N\to \R_+,$ such that
\begin{enumerate}
  \item $\Delta V(a,x):=\Pi_\t V(a,x)-V(a,x)\leq 0,\quad \forall (a,x)\notin C;$
  \item $\forall N \in \N,\ V_N:= \{ (a,x)\in S\times \N; \ V(a,x)\leq N\}$ is a petite set.
\end{enumerate}
For all $B\in {\cal {B}}(S\times \N)$ and $i\in\{1,2\}$ let $pr_i(B)$ be the projection of $B$ onto $S$ when $i=1$ and
onto $\N$ when $i=2$. We also let $T_B$ be the first hitting time of the set $B$ by the chain  $\{(\breve \w_k,Z_k)\}_{k\geq 0}$ . 
Thanks to Assumptions~\ref{hyp:noyau} and~\eqref{eq:S_subset}, we can write 
   \begin{align*}
  \Pt_{(a,x)}(T_B<\infty)&\geq\int_{pr_1(B)} \sum_{y\in pr_2(B)}\binom{x+y}{x}b^{x+1}(1-b)^y \breve q_\t(a,b) db\\
& \geq 
  \frac{\sigma_-^2}{\sigma_+}\eps^{x}\int_{pr_1(B)}\sum_{y\in pr_2(B)}b(1-b)^ydb .
  \end{align*} 
Hence the Markov chain is $\varphi$-irreducible \citep[see Section 4.2
in][]{Meyn_Tweedie},  where the  measure $\varphi$  defined  on ${\cal
  {B}}(S\times \N)$ by 
\[
B\mapsto \varphi(B):=\int_{pr_1(B)}\sum_{y\in pr_2(B)}b(1-b)^ydb
\]
 is a probability measure. From Proposition 4.2.2 in \cite{Meyn_Tweedie}, the chain is also $\psi$-irreducible. 
 Thanks to Assumption~\ref{hyp:noyau} again, we can easily see that 
  for all $ N\in\N$, the set  $S\times \sem{1,N}$  is a small set and as a consequence a petite set. Indeed, for any $(a,x)\in S\times \sem{1,N}$  we have 
\begin{equation*}
  \Pi_\t((a,x),(b,y))       \ge      \frac{ \sigma_-^2}{\sigma_+} b^{N+1}(1-b)^y\geq  \frac{ \sigma_-^2}{\sigma_+}\eps^{N} b(1-b)^y.
\end{equation*}
Let 
\[
V(a,x)=x\EEt_a(\breve R)=x\EEt_a(1+ \tilde{\w}_{-1}+\tilde{ \w}_{-1}\tilde{ \w}_{-2}+\dots).
\]
By using~\eqref{eq:ballis_extended},  function   $V$   is
finite. Moreover, we get that if $(a,x)\in V_N,$ then
 $ x\leq N,$ which proves that for all $N\in \N$, the set $V_N$ is a petite set. 
Now, we consider
  \begin{align*}
 \Pi_\t V(a,x)&=\int_{ S} \sum_{y\in \N} y\EEt_b(\breve R)\Pi_\t((a,x); (b,y)) db\\
 &=\int_{ S} \sum_{y\in \N} y\EEt_b(\breve R)\binom{x+y}{x}b^{x+1}(1-b)^y {\breve q_\t(a,b)} db\\
 &=(x+1)\int_{ S} {\breve q_\t(a,b)} \left(\frac {1-b}{b}\right)\ \EEt_b (\breve R) db \\
& =(x+1)\EEt_a [\tilde \w_{-1} \EEt_{\w_{-1}} (\breve R)]\\
 &=(x+1)\EEt_a[\tilde \w_{-1} \EEt_a[\breve R\circ\tau^1|\breve {\F}_1]]\\
&=(x+1)\EEt_a[\tilde \w_{-1}(1+\tilde \w_{-2}+\tilde \w_{-2}\tilde \w_{-3}+\dots )]\\
& =(x+1)\EEt_a(\breve R-1).
 \end{align*} 
 Note also that $(c_+-1)\ge \EEt_a(\breve R-1)>0$. As a consequence, for all $(a,x)\notin S\times \sem{0,c_+-1}$ we have $ \Pi_\t V(a,x)\leq V(a,x)$.
This concludes the proof of the proposition.
\end{proof}

\subsection{Proof of consistency}
\label{sec:cons_proof}
Consistency of the maximum likelihood  estimator is given by Theorem 1
in  \cite{DMR_04}  for  the  observations generated  under  stationary
distribution and then extended by Theorem 5 in the same
reference for  a general initial  distribution case. Both  results are
established under some assumptions that we  now investigate
in  our context.  Note that  our process  is not  stationary  since it
starts from $(\w_0,Z_0)\sim \mu_{\ts} \otimes \delta_0$. Thus, we rely
on Theorem 5 from \cite{DMR_04} to establish the properties of our estimator.
We  show  that  our  assumptions   on  the  RWRE  ensure  the  general
assumptions on the autoregressive process with Markov regime needed to
establish the consistency of the MLE~\citep[Assumptions (A1) to (A5) in][]{DMR_04}.

First, Assumption~\ref{hyp:noyau} is sufficient to ensure Assumption~(A1) from~\cite{DMR_04}. Indeed, 
 Assumption~\ref{hyp:noyau} implies Inequalities~\eqref{eq:breve_bound}
which correspond exactly to part (a) of (A1) on transition $\breve q_\theta$. Moreover, statement (b) of (A1) writes in our case as 
\begin{equation*}
\forall (x,y)\in \N^2, \quad 
 \int_{S} g_a(x,y)da = \binom{x+y}{x} \int_{ S} a^{x+1}(1-a)^y da 
\end{equation*}
positive and finite, which is automatically satisfied here.  \\


 Assumption~(A2)  from~\cite{DMR_04}   requires  that  the  transition
 kernel density $\Pi_\t$ of the Markov chain $\{(\breve \w_k,Z_k)\}_{k\ge 0}$
 defined by~\eqref{eq:bikernel} 
 is     positive     Harris     recurrent    and     aperiodic.     In
 Proposition~\ref{prop:biv_Harris}, we  proved that this  is satisfied
 as  soon  as this  is  the case  for  the  environment kernel  $q_\t$
 (namely Assumption~\ref{hyp:noyau}) and under the ballistic assumption~\ref{hyp:ballistic} on the
 RWRE. 
Let us recall that $\bPt$ and $\bEt$ are the probability and
expectation induced  when considering the  chain $\{(\breve \w_k,Z_k)\}_{k\ge
  0}$  under its  stationary distribution  $\pi_\t$.

With the ballistic condition, we obtain  Assumption~(A3) from~\cite{DMR_04}, as stated in the following proposition. 

\begin{prop}
 Under Assumptions~\ref{hyp:ballistic} and~\ref{hyp:noyau}, we have 
\[
\sup_{(x,y)\in  \N^2}\sup_{a\in  S}  g_a(x,y)  <+\infty \text{  and  }
\bEt[\log \int_{ S} g_a(Z_0,Z_1)da]<+\infty . 
\]
\end{prop}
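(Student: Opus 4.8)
The statement has two parts. The plan is to prove each bound separately, with the first being essentially immediate and the second requiring a bit more work via the explicit emission density and the uniform ellipticity condition.

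For the first bound, I would observe that by the explicit form of the emission density in~\eqref{eq:emission}, we have $g_a(x,y)=\binom{x+y}{x}a^{x+1}(1-a)^y$. Under the uniform ellipticity assumption~\eqref{eq:S_subset}, $a\in S\subseteq[\eps,1-\eps]$, so $a\le 1-\eps$ and $1-a\le 1-\eps$. The binomial coefficient is bounded by $2^{x+y}$, but more usefully one notes that $\sup_{a\in S}a^{x+1}(1-a)^y\le (1-\eps)^{x+y+1}$, and the product with $\binom{x+y}{x}$ is then bounded by $(2(1-\eps))^{x+y+1}$. This does not immediately give a uniform bound over all $(x,y)$. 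A cleaner route: for fixed $(x,y)$ the map $a\mapsto a^{x+1}(1-a)^y$ is maximized at $a=(x+1)/(x+y+1)$, giving a value that, multiplied by $\binom{x+y}{x}$, stays bounded (indeed the maximum over $a\in[0,1]$ of $\binom{x+y}{x}a^{x+1}(1-a)^y$ equals the maximal term of a Beta-type density and is bounded uniformly in $(x,y)$ by a constant). Combined with the fact that $S$ is bounded away from $0$ and $1$, this yields $\sup_{(x,y),a}g_a(x,y)<+\infty$.

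For the second bound, I would first use the \emph{upper} bound just established to control the integrand from above: since $\int_S g_a(Z_0,Z_1)da\le |S|\cdot\sup_{a}g_a(Z_0,Z_1)$ is bounded by a constant, the logarithm is bounded above by a constant, hence $\bEt[\log\int_S g_a(Z_0,Z_1)da]<+\infty$ would follow \emph{immediately from the upper bound alone} — the finiteness here is an upper bound on an expectation of a quantity that is itself bounded above. So the only genuine concern is whether the expectation could be $-\infty$, but the statement only claims $<+\infty$, so the first part of the proposition essentially hands us the second. Thus the real content is the uniform sup bound.

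The main obstacle, then, is the uniform bound on $g_a(x,y)$ over all $(x,y)\in\N^2$ simultaneously. I expect the key step to be recognizing that $\binom{x+y}{x}a^{x+1}(1-a)^y\le a\cdot\binom{x+y}{x}a^{x}(1-a)^y$ and that $\sum_{x'=0}^{x+y}\binom{x+y}{x'}a^{x'}(1-a)^{x+y-x'}=1$ forces each term to be at most $1$, giving $g_a(x,y)\le a\le 1-\eps<1$. This is the clean argument: each $g_a(x,y)$ is a single term in a binomial expansion summing to $a$ (after factoring), so it is trivially bounded by a constant uniformly in $(x,y)$ and $a$. I would present this factoring identity as the heart of the proof, then note the second assertion follows since the integrand's logarithm is bounded above, so its expectation under $\bPt$ cannot exceed a finite constant.
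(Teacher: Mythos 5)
Your treatment of the first assertion is correct and is essentially the paper's argument: writing $g_a(x,y)=a\cdot\binom{x+y}{x}a^{x}(1-a)^{y}$ and observing that the second factor is a single term of the binomial expansion of $\bigl(a+(1-a)\bigr)^{x+y}=1$ gives $g_a(x,y)\le a\le 1$ uniformly in $(x,y)$ and $a$.

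The second assertion is where you have a genuine gap. You read the claim literally as a one-sided bound and conclude it follows trivially from the sup bound, explicitly dismissing the possibility that the expectation equals $-\infty$ on the grounds that the statement ``only claims $<+\infty$.'' But this proposition exists to verify Assumption (A3) of Douc, Moulines and Ryd\'en (2004), which requires $\log\int_S g_a(Z_0,Z_1)da$ to be integrable: the substantive issue is precisely that its negative part has finite expectation. Your reading renders the proposition an empty corollary of the first part and never uses the ballistic assumption~\ref{hyp:ballistic}, even though it appears in the hypotheses --- that should have been a warning sign. The paper's proof supplies the missing lower bound: $\log\binom{Z_0+Z_1}{Z_0}\ge 0$, and by the uniform ellipticity condition~\eqref{eq:S_subset} one has $\int_S a^{Z_0+1}(1-a)^{Z_1}da\ge |S|\,\eps^{Z_0+Z_1+1}$, so the negative part of $\log\int_S g_a(Z_0,Z_1)da$ is bounded by a constant times $Z_0+Z_1+1$. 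Finiteness of the expectation then reduces to $\bEt(Z_0)<\infty$, which the paper computes from the explicit stationary density of Proposition~\ref{prop:biv_Harris} as $\bEt(Z_0)=\EEt(R)-1$; this is finite exactly because of the ballistic condition. You need to add this lower bound and the moment computation for your proof to carry the content the proposition is meant to deliver.
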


\begin{proof}
The first condition is satisfied according to the definition of $g$ given in~\eqref{eq:emission}. Moreover, we have 
\[
\log  \int_{S}  g_a(Z_0,Z_1)da  = \log\binom{Z_0+Z_1}{Z_0}  +\log
\int_{ S} a^{Z_0+1}(1-a)^{Z_1}da.
\]
Relying on Stirling's approximation, we have 
\[
\log\binom{Z_0+Z_1}{Z_0} = Z_0\log\left(1+\frac{Z_1}{Z_0}\right) +
Z_1\log\left(1+\frac{Z_0}{Z_1}\right) + O_{P}(\log(Z_0+Z_1)),
\] 
where $O_{P}(1)$ stands for a sequence that is bounded in probability.
Thus we can write
\[
\log\binom{Z_0+Z_1}{Z_0} \le Z_0 +Z_1 + O_P(\log(Z_0+Z_1)).
\]
Moreover,  under  assumption~\eqref{eq:S_subset}, we have 
\begin{multline*}
|S| \times [ (Z_0+1)\log \eps+Z_1\log\eps] 
\le \log \int_{ S} a^{Z_0+1}(1-a)^{Z_1}da \\
\le (Z_0+1)\log (1-\eps ) + Z_1\log(1-\eps),
\end{multline*}
where $|S|$ denotes  either the Lebesgue measure  of $S$ or its cardinality  when $S$ is discrete.
As a conclusion, as  soon as $\bEt(Z_0)<+\infty$, the second statement
in the proposition is satisfied.
Now, from the definition of $\pi_\t$ given in Proposition~\ref{prop:biv_Harris}, we get 
\begin{align*}
  \bEt(Z_0)   &=   \sum_{x\in    \N}   \int_{   S}   x\mu_\t(a)
  \EEt_a(R^{-1}(1-R^{-1})^x) da \\
&= \int_{ S} \mu_\t(a) \EEt_a(R-1 ) da\\
& = \EEt(R )-1,
\end{align*}
which is finite thanks to~\ref{hyp:ballistic}.
\end{proof}

Assumption~\ref{hyp:regular} on $q_\theta$  is sufficient to ensure (A4) from~\cite{DMR_04} on $\breve q_\theta$.

Now, we let $\bPtZ$ denote the 
marginal of the distribution  $\bPt$ on the set $\N^\N$ (corresponding
to the  second marginal).  In order to  ensure identifiability  of the
model    \citep[Assumption   (A5)    in   ][]{DMR_04},    we   require
identifiability  of  the  parameter   from  the  distribution  of  the
environment (Assumption~\ref{hyp:ident} in our work). 
\begin{lemm}
  Under Assumption~\ref{hyp:ident}, the autoregressive process with Markov regime has identifiable parameter, i.e.
\[
\forall \t,\t' \in \Theta, \quad 
    \theta = \theta' \iff \bPtZ = \overline{\mathbb{P}}^{\theta',Z}.
\] 
\end{lemm}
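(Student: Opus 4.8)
The forward implication is immediate, so the whole content is the converse: assuming $\bPtZ = \overline{\mathbb{P}}^{\theta',Z}$, the plan is to show that this forces $\breve q_\t = \breve q_{\t'}$, hence (as explained below) $q_\t = q_{\t'}$, at which point Assumption~\ref{hyp:ident} gives $\t = \t'$. I would therefore reconstruct the law of the hidden environment chain from the law of the observed sequence $(Z_k)_{k\ge 0}$, using crucially that the emission kernel $g_a$ does not depend on the parameter and has the explicit form $g_a(x,y) = \binom{x+y}{x}a^{x+1}(1-a)^y$, with generating function $\sum_{y\ge0} g_a(x,y)s^y = \big(a/(1-(1-a)s)\big)^{x+1}$.

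The key device is to condition on $\{Z_0 = 0\}$, an event of positive probability under the stationary law by Proposition~\ref{prop:biv_Harris}. Conditionally on $\{Z_0=0\}$ and on $\breve\w_1 = a$, the variable $Z_1$ is geometric, i.e. distributed as $g_a(0,\cdot)$; moreover, by the Markov property of the environment, conditionally on $\breve\w_1$ the next coordinate $\breve\w_2$ is distributed according to $\breve q_\t(\breve\w_1,\cdot)$ and is independent of $\{Z_0=0\}$ (which depends on $\breve\w_0$ and the reversed past only). Hence the joint law $\rho$ of $(\breve\w_1,\breve\w_2)$ given $\{Z_0=0\}$ factorizes as $\rho(da,db) = \nu(da)\breve q_\t(a,b)\,db$, where $\nu$ is the law of $\breve\w_1$ given $\{Z_0=0\}$, a reweighting of $\mu_\t$ whose density is bounded below on $S$. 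I would then compute the bivariate generating function of $(Z_1,Z_2)$ given $\{Z_0=0\}$; it equals $\iint_{S^2} ab\,[\,1 - s_2(1-b) - s_1 b(1-a)\,]^{-1}\,\rho(da,db)$, whose Taylor coefficients are, up to explicit binomial factors, the mixed moments $\iint_{S^2} a(1-a)^j b^{j+1}(1-b)^k\,\rho(da,db)$ for $j,k\ge 0$. Writing $a(1-a)^j b^{j+1}(1-b)^k = ab\,((1-a)b)^j(1-b)^k$, these are exactly the moments of the pushforward of the finite measure $ab\,\rho(da,db)$ under the map $(a,b)\mapsto ((1-a)b,\,1-b)$, which is injective on $S^2\subseteq[\eps,1-\eps]^2$. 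Since the image is compact, the Hausdorff moment problem shows that the observed conditional law of $(Z_1,Z_2)$ determines $ab\,\rho$, hence $\rho$, hence (dividing by the density of $\nu$) the transition $\breve q_\t$ on all of $S\times S$.

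Once $\breve q_\t$ is recovered from $\bPtZ$, its unique invariant density is $\mu_\t$, and the reversal identity $q_\t(b,a) = \mu_\t(a)\breve q_\t(a,b)/\mu_\t(b)$ yields $q_\t$; equality of the observation laws therefore propagates to $q_\t = q_{\t'}$, and Assumption~\ref{hyp:ident} concludes. The main obstacle is the middle step: one must (i) justify the factorization $\rho(da,db) = \nu(da)\breve q_\t(a,b)\,db$ under the stationary coupling, where $Z_0$ genuinely depends on an infinite stretch of environment coordinates through $R$ (cf. the explicit $\pi_\t$ of Proposition~\ref{prop:biv_Harris}), and (ii) carry out the deconvolution rigorously in the continuous-$S$ case, where the moment-problem identifiability of these geometric/negative-binomial mixtures does the real work. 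The uniform ellipticity $S\subseteq[\eps,1-\eps]$ is what keeps all the relevant changes of variables and divisions well defined.
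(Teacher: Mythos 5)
Your proposal is correct, but it follows a genuinely different route from the paper's. The paper's proof works with the full finite-dimensional laws: it writes $\bPt((Z_0,\dots,Z_n)=(z_0,\dots,z_n))$ as the integral of the mixing density $\pi_\t(a_0,z_0)\prod_{i=1}^n\breve q_\t(a_{i-1},a_i)$ against the positive, $\theta$-free emission kernels $\prod_{i=1}^n g_{a_i}(z_{i-1},z_i)$, and asserts that equality of $\bPtZ$ and $\overline{\mathbb{P}}^{\theta',Z}$ therefore forces $\pi_\t(a_0,z_0)\prod_i\breve q_\t(a_{i-1},a_i)=\pi_{\theta'}(a_0,z_0)\prod_i\breve q_{\theta'}(a_{i-1},a_i)$ almost everywhere on $S^{n+1}$, whence $\breve q_\t=\breve q_{\theta'}$ a.e.\ and, since Assumption~\ref{hyp:ident} can be phrased equivalently on $q_\t$ or $\breve q_\t$, $\t=\t'$. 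The step of ``stripping off'' the emission kernels is exactly the injectivity of the map sending a density on $S^{n+1}$ to its integrals against the family $\prod_i g_{a_i}(z_{i-1},z_i)$, and the paper leaves this injectivity implicit; your proof supplies it explicitly and in minimal form, by conditioning on $\{Z_0=0\}$ (an event of positive probability by Proposition~\ref{prop:biv_Harris}, since $\EEt_a(R^{-1})\ge 1/c_+>0$), computing the bivariate generating function $ab\,[1-s_2(1-b)-s_1b(1-a)]^{-1}$ of $(Z_1,Z_2)$, and solving a two-dimensional Hausdorff moment problem for the pushforward of $ab\,\rho(da,db)$ under the injective map $(a,b)\mapsto((1-a)b,1-b)$. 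Your factorization $\rho(da,db)=\nu(da)\breve q_\t(a,b)\,db$ is legitimate: under the stationary coupling $Z_0$ is measurable with respect to $\sigma(\breve\w_k,\,k\le 0)$ and auxiliary randomness independent of the environment, so the Markov property of $\breve\bw$ gives conditional independence of $\breve\w_2$ from $Z_0$ given $\breve\w_1$. What each approach buys: the paper's argument is shorter and treats all $n$ at once, but hides the completeness argument that carries the real content; yours is constructive, uses only a pair of consecutive observations, and makes the deconvolution (and the role of the uniform ellipticity $S\subseteq[\eps,1-\eps]$) explicit. Both arguments ultimately recover $\breve q_\t$ only Lebesgue-almost everywhere before invoking Assumption~\ref{hyp:ident}, so the residual a.e.-versus-everywhere issue is shared and is not a defect specific to your version.
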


\begin{proof}
We prove that $\theta$ is uniquely defined from $\bPtZ$. The knowledge of the distribution $\bPtZ$ means that for any $n\in \N$, any sequence $z_0,\dots,z_n\in \N^{n+1}$, we know the quantity 
\begin{multline*}
\bPt((Z_0,\dots,Z_n)=(z_0,\dots,z_n)) \\
= \int_{S}\dots\int_{S}  \pi_\t(a_0,z_0)\prod_{i=1}^{n}\breve q_\t (a_{i-1},a_i)\prod_{i=1}^n g_{a_i}(z_{i-1},z_i) da_0\dots da_n.
\end{multline*}
Since $g$ does not depend on $\theta$ and is positive, if we assume that $\bPtZ = \overline{\mathbb{P}}^{\theta',Z}$ we obtain from the above expression that 
\[
 \pi_\t(a_0,z_0)\prod_{i=1}^{n}\breve q_\t (a_{i-1},a_i) =  \pi_{\theta'}(a_0,z_0)\prod_{i=1}^{n} \breve q_{\theta'} (a_{i-1},a_i) , 
\]
almost surely (w.r.t. the underlying measure on $S^{n+1}$). 
Noting that Assumption~\ref{hyp:ident} can be formulated on $q_\theta$ or on $\breve q_\theta$ equivalently, this implies $\t=\theta'$. 
\end{proof}

 Now a  direct application from
  Theorem~5 in  \cite{DMR_04} combined with  our previous developments
  establishes that under  Assumptions~\ref{hyp:noyau}  to~\ref{hyp:ident}, the  maximum
  likelihood estimator $\hat \theta_n$ converges $\Pstar$-almost surely to the true parameter
  value $\ts$ as $n$ tends to infinity.

\subsection{Proof of asymptotic normality}
\label{sec:AN_proof}

Applying  Theorem~6 from  \cite{DMR_04} and  using that  in  our case,
their   assumptions   (A6)   to   (A8)   are   satisfied  for $\breve q_\theta$ under   our
Assumption~\ref{hyp:regular2}, we  obtain the weak  convergence of the
conditional  score to  a  Gaussian distribution,  as  soon as  the
  asymptotic variance  is defined, which  means as soon as  the Fisher
  information matrix is invertible.



\end{document}